\newtheoremstyle{break}
  {\topsep}{\topsep}%
  {\itshape}{}%
  {\bfseries}{}%
  {\newline}{}%
\newtheorem{thm}{Theorem}[section]
\newtheorem{lem}[thm]{Lemma}
\newtheorem{bsp}[thm]{Example}
\newtheorem{defn}[thm]{Definition}
\newtheorem{Bem}[thm]{Remark}
\newtheorem{rem}[thm]{Remark}
\renewenvironment{proof}[1][\proofname]{\par
  \pushQED{\qed}%
  \normalfont\topsep6\p@\@plus6\p@\relax
  \trivlist
  \item[\hskip\labelsep
        \bfseries
    #1\@addpunct{.}]\mbox{}
}{%
  \popQED\endtrivlist\@endpefalse
}
\numberwithin{equation}{section}
\def\eps{\varepsilon}
\def\BB{\mathcal{B}}
\def\KK{\mathcal{K}}
\def\NN{\mathcal{N}}
\def\QQ{\mathcal{Q}}
\def\xl{x_l^\beta}
\def\ul{u_l^\beta}
\def\xb{x^\beta}
\newcommand{\R}{\mathbb{R}}
\newcommand{\N}{\mathbb{N}}		
\newcommand{\norm}[1]{\left\lVert#1\right\rVert}
\newcommand{\X}{\mathbb{X}}
\newcommand{\U}{\mathbb{U}}
\newcommand{\Y}{\mathbb{Y}}
\newcommand{\xk}{(x(k),u(k))}
\newcommand{\equb}{(x^\beta,u^\beta)}
\newcommand{\equbl}{(x^\beta_l,u^\beta_l)}
\newcommand{\tell}{\tilde{\ell}}
\begin{document}

\title{Local turnpike analysis using local dissipativity for discrete time discounted optimal control}
\author{Lars Gr\"une and Lisa Kr\"ugel}
\publishers{Chair of Applied Mathematics, Mathematical Institute\\
Universit\"at Bayreuth, Germany}
\date{\today}
\maketitle

\begin{abstract}
	Recent results in the literature have provided connections between the so-called turnpike property, near optimality of closed-loop solutions, and strict dissipativity. Motivated by applications in economics, optimal control problems with discounted stage cost are of great interest. In contrast to non-discounted optimal control problems, it is more likely that several asymptotically stable optimal equilibria coexist. Due to the discounting and transition cost from a local to the global equilibrium, it may be more favourable staying in a local equilibrium than moving to the global – cheaper – equilibrium. In the literature, strict dissipativity was shown to provide criteria for global asymptotic stability of optimal equilibria and turnpike behavior. In this paper, we propose a local notion of discounted strict dissipativity and a local turnpike property, both depending on the discount factor. Using these concepts, we investigate the local behaviour of (near-)optimal trajectories and develop conditions on the discount factor to ensure convergence to a local asymptotically stable optimal equilibrium.\\
	\textbf{Keywords: Discounted Optimal Control, Dissipativity, Turnpike}
\end{abstract}

\section{Introduction}
{\let\thefootnote\relax\footnotetext{The authors are supported by DFG Grant Gr 1569/13-2.}}
In recent years, dissipativity as introduced into systems theory by Willems \cite{Will72a,Will72b} has turned out to be a highly useful concept in order to understand the qualitative behaviour of optimally controlled systems. While related ideas were already present in early works by Willems \cite{Will71} in a linear quadratic setting, the approach has been revived and extended to fully nonlinear problems motivated by the observation of the importance of dissipativity concepts in model predictive control \cite{DiAR10,AnAR12,MuAA15,MuGA15} and for the characterization of the turnpike property \cite{GruM16}. The turnpike property expresses the fact that optimal (and possible also near-optimal) trajectories stay within a vicinity of an optimal equilibrium for most of the time. It can be seen as a way to generalize asymptotic stability properties of optimal equilibria to finite- and infinite-horizon optimal control problems.
While the references just discussed addressed non-discounted optimal control problems, the results from \cite{GGHKW18,GaGT15,GMKW20,Gruene2015} show that central results from this theory can be carried over to discounted optimal control problems and complement detectability-based approaches such as \cite{PBND17,PBND14} for analysing global asymptotic stability of equilibria of discounted optimally controlled systems.

A crucial difference between discounted and non-discounted optimal control problems is that in discounted problems it is much more likely that several asymptotically stable optimal equilibria coexist. Indeed, assuming complete controllability, in non-discounted optimal control two optimal equilibria can only coexist for arbitrary long (or infinite) horizons if they yield exactly the same optimal cost. Otherwise, for sufficiently long time it will always be beneficial to steer the system from the more expensive equilibrium to the cheaper one. In contrast to this, in discounted optimal control, due to the discounting it may not be possible to compensate for the transition cost from one equilibrium to the other with the lower cost of staying in the cheaper equilibrium. Therefore, in the discounted case locally asymptotically stable equilibria with different costs may coexist even for infinite horizon problems. In mathematical economy, where discounted optimal control problems are an important modelling tool, this is a well known fact at least since the pioneering work of Skiba \cite{Skib78} and Dechert and Nishimura \cite{DecN83}, and since then it was observed in many other papers, see, e.g., \cite{HKHF03} and the references therein.

It is the goal of this paper to show that a local version of the strict dissipativity property for discounted optimal control problems can be used for obtaining local convergence results to optimal equilibria. More precisely, we show that in the presence of local strict dissipativity and appropriate growth conditions on the optimal value functions there exist two thresholds for the discount factor $\beta\in(0,1)$, denoted by $\beta_1$ and $\beta_2$, with the following properties: Whenever $\beta\ge \beta_1$, any optimal trajectory that stays near a locally optimal equilibrium converges to this equilibrium. Whenever $\beta\le \beta_2$, any optimal trajectory that starts near this equilibrium will stay near the equilibrium. Together, this yields an interval $[\beta_1,\beta_2]$, which --- provided that $\beta_1\le \beta_2$ holds --- contains the discount factors for which convergence of optimal trajectories to the locally optimal equilibrium holds locally. 
We formalize this convergence behaviour using the formalism from turnpike theory (see, e.g., \cite{Gruene2017}), because this provides a convenient way to express these properties in a mathematically precise way also for near-optimal trajectories and to link our results to the recent literature on the relation between dissipativity and turnpike properties. We carry out our analysis in discrete time because this simplifies some of our arguments, yet we think that conceptually similar results can also be achieved for continuous time problems.

The remainder of this paper is organised as follows. In Section \ref{sec:setting} we introduce the precise problem formulation and notation. Section \ref{sec:global} summarises the known results for globally strictly dissipative discounted problems. In Section \ref{sec:local} we show how this result can be reformulated in case that only local strict dissipativity holds, provided the trajectories under consideration satisfy an invariance condition. In Section \ref{sec:stay} we then show that this invariance condition is ``automatically'' satisfied under suitable conditions. Section \ref{sec:main} then contains the main result by bringing together the two results from Sections \ref{sec:local} and \ref{sec:stay}. In Section \ref{sec:ex} we illustrate our results by several examples and the final Section \ref{sec:conclusion} provides a brief concluding discussion.

\section{Setting and preliminaries}\label{sec:setting}
\subsection{System class and notation}
We consider discrete time nonlinear systems of the form
\begin{equation}\label{eq: nsys}
x(k+1)=f(x(k),u(k)),\quad x(0)=x_0
\end{equation}
for a map $f: X\times U\to X$, where $X$ and $U$ are normed spaces. We impose the constraints  $(x,u)\in \Y\subset  X\times U$ on the state $x$ and the input $u$ and define $\X:=\{x\in X \mid \exists u\in U: (x,u)\in\Y\}$ and $\U:=\{u\in U\mid \exists x\in X: (x,u)\in \Y\}$. A control sequence $u\in \U^N$ is called admissible for $x_0\in\X$ if $\xk\in\Y$ for $k=0,\dots,N-1$ and $x(N)\in\X$. In this case, the corresponding trajectory $x(k)$ is also called admissible. The set of admissible control sequences is denoted by $\U^N(x_0)$. Likewise, we define $\U^\infty(x_0)$ as the set of all control sequences $u\in\U^\infty$ with $\xk\in\Y$ for all $k\in\N_0$. Furthermore, we assume that $\X$ is controlled invariant, i.e. that $\U^\infty(x_0)\neq \empty$ for all $x_0\in\X$. The trajectories of \eqref{eq: nsys} are denoted by $x_u(k,x_0)$ or simply by $x(k)$ if  there is no ambiguity about $x_0$ and $u$. 

We will make use of comparison-functions defined by
\begin{align*}
\mathcal{K} :=\{\alpha:\R_0^+\to\R^+_0&| \alpha \text{ is continuous and strictly increasing with }\alpha(0)=0\}\\
\mathcal{K}_\infty :=\{\alpha:\R_0^+\to\R^+_0&|\alpha\in\mathcal{K}, \alpha \text{ is unbounded}\}\\
\mathcal{L}:=\{\delta:\R_0^+\to\R^+_0&|\delta\text{ is continuous and strictly decreasing with} \lim_{t\to\infty}\delta(t)=0\}\\
\mathcal{KL}:=\{\beta:\R_0^+\times\R_0^+\to\R_0^+&|\beta \text{ is continuous, }  \beta(\cdot,t)\in\mathcal{K}, \beta(r,\cdot)\in\mathcal{L}\}.
\end{align*}
Moreover, with $\BB_\eps(x_0)$ we denote the open ball with radius $\eps>0$ around $x_0$.%

In this paper we consider infinite horizon discounted optimal control problems, i.e. problems of the type
\begin{equation}\label{dis OCP}
	\min_{u\in\U\infty(x_0)} J_\infty(x_0,u) \quad \text{with } J_\infty(x_0,u)=\sum_{k=0}^\infty \beta^k\ell(x(k,x_0),u(k)).
\end{equation}
Herein, the number $\beta \in (0,1)$ is called the discount factor. 

For such problems it was shown in \cite{GGHKW18} that if the optimal control problem is strictly dissipative at an optimal equilibrium $x^\beta$, then for sufficiently large $\beta\in(0,1)$ all optimal trajectories converge to a neighbourhood of $x^\beta$. This neighbourhood shrinks down to $x^\beta$ when $\beta\to 1$, cf.\ \cite[Theorem 4.4]{GGHKW18}. Under slightly stronger conditions on the problem data one can even show that the optimal trajectories converge to the optimal equilibrium $x^\beta$ itself and not only to a neighbourhood, cf.\ \cite[Section 6]{GGHKW18}. We will show in Theorem \ref{th: disturninf}, below, that this result can be rewritten in the language of turnpike theory, in which convergence is weakened to the property that the trajectories stay in a neighbourhood of the optimal equilibrium for a (quantifiable) amount of time, but not necessarily forever. While only the optimal trajectories satisfy convergence to the optimal equilibrium, we will show that also near-optimal trajectories satisfy the turnpike property.\footnote{We note that the turnpike property can also be defined for finite horizon optimal control problems. Still, we restrict ourselves to the infinite horizon case, since it was shown in \cite{Gruene2017} that under mild conditions on the problem data the finite horizon turnpike property holds if and only if the infinite horizon turnpike property holds.}

While this global turnpike result follows from a relatively straightforward modification of the arguments in \cite{GGHKW18}, the main question that we want to address in this paper is more difficult: assume that strict dissipativity does not hold globally but only in a neighbourhood of a locally optimal equilibrium $x_l^\beta$. Can we still expect to see a turnpike property of trajectories starting close to $x_l^\beta$?

For the derivation of our technical results, we make frequent use of the dynamic programming principle
\[V_\infty(x_0)= \inf_{u\in\U^1(x_0)}\{\ell(x,u)+\beta V_\infty(f(x_0,u))\},\]
where
\[V_\infty(x_0):=\min_{u\in\U^\infty(x_0)}J_\infty(x_0,u)\]
denotes the optimal value function of \eqref{dis OCP}.
If $u^*\in\U^\infty(x_0)$ is an optimal control sequence for an initial value $x_0\in\X$, i.e.\ if $J_\infty(x_0, u^*)=V_\infty(x_0)$ holds, then the identity 
\[V_\infty(x_0)=\ell(x_0,u^*(0))+\beta V_\infty(f(x_0,u^*(0)))\]
holds. Proofs for these statements can be found, e.g., in \cite[Section 4.2]{Gruene2017a}.
We denote optimal trajectories by $x^*(k,x_0)$ and we say that a set $\X_{inv}\subset \X$ is forward invariant for the optimally controlled system, if  for each $x_0\in\X_{inv}$ it follows that $x^*(k,x_0)\in\X_{inv}$ for all $k\geq 0$ and all optimal trajectories starting in $x_0$.

\section{The global discounted turnpike property}\label{sec:global}

In this section we first consider the optimal control problem \eqref{dis OCP} assuming {\em global} strict dissipativity. We show that under similar technical assumptions and with a similar proof technique as in \cite{GGHKW18} we can obtain a global turnpike result for near-optimal trajectories. To this end, we first introduce discounted strict dissipativity and afterwards we use it to conclude the turnpike property.

\subsection{Global discounted strict dissipativity}

We denote an equilibrium of system \eqref{eq: nsys} in the discounted case by $\equb$ since the equilibria are dependent on the discount factor $\beta\in (0,1)$.

\begin{defn}\label{def:discdiss}
Given a discount factor $\beta\in(0,1)$, we say that the system \eqref{eq: nsys} is discounted strictly dissipative at an equilibrium $\equb$ with supply rate $s:\Y\to\R$ if there exists a storage function $\lambda:\X\to\R$ bounded from below with $\lambda(x^\beta)=0$ and a class $\mathcal{K}_\infty$-function $\alpha$ such that the inequality
\begin{equation}
s(x,u)+\lambda(x)-\beta\lambda(f(x,u))\geq \alpha(\|x-\xb\|)
\label{eq:globdiss}\end{equation}
holds for all $(x,u)\in\Y$ with $f(x,u)\in\X$.
\end{defn}
The following lemma is Proposition 3.2 from \cite{GMKW20}. Since its proof is short and simple, we provide it here for convenience of the readers. It shows that we can replace the stage cost $\ell$ by a modified---usually called \emph{rotated}---stage cost $\tell$ that is positive definite without changing the optimal trajectories.

\begin{lem}\label{prop: traj}
Consider the discounted optimal control problem \eqref{dis OCP} with discount factor $\beta \in(0,1)$ and assume the system \eqref{eq: nsys} is discounted strictly dissipative at an equilibrium $\equb$ with supply rate $s(x,u)=\ell(x,u)-\ell\equb$ and bounded storage function $\lambda$.
Then the optimal trajectories of \eqref{dis OCP} coincide with those of the problem
\begin{equation}\label{mod OCP}
\min_{u\in\U^\infty(x_0)}\widetilde{J}_\infty(x_0,u) \quad\text{with } \widetilde{J}_\infty(x_0,u):=\sum_{k=0}^\infty\beta^k\tell(x(k,x_0),u(k))
\end{equation}
with rotated stage cost
\begin{equation}
\tell(x,u)=\ell(x,u)-\ell\equb+\lambda(x)-\beta\lambda(f(x,u))
\end{equation}
which is positive definite in $x^\beta$ at $\equb$, i.e.\ it satisfies the inequality $\tell(x,u) \ge \alpha(\|x-\xb\|)$ with $\alpha\in\KK_\infty$ from \eqref{eq:globdiss} for all $(x,u)\in\Y$.
\end{lem}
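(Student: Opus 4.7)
The plan is to show that the rotated cost functional $\widetilde{J}_\infty(x_0,u)$ differs from the original cost functional $J_\infty(x_0,u)$ only by a term that does not depend on the control sequence $u$. Once this is established, a control sequence minimizes $\widetilde{J}_\infty(x_0,\cdot)$ if and only if it minimizes $J_\infty(x_0,\cdot)$, so the optimal trajectories coincide.

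First I would verify positive definiteness: plugging the choice $s(x,u)=\ell(x,u)-\ell\equb$ into \eqref{eq:globdiss} yields
\[
\ell(x,u)-\ell\equb+\lambda(x)-\beta\lambda(f(x,u)) \;\ge\; \alpha(\|x-\xb\|),
\]
which is exactly $\tell(x,u)\ge\alpha(\|x-\xb\|)$ for all admissible $(x,u)$. Next I would compute, for any admissible $u\in\U^\infty(x_0)$ and any $N\in\N$, the truncated sum
\[
\sum_{k=0}^{N-1}\beta^k\tell(x(k),u(k)) = \sum_{k=0}^{N-1}\beta^k\ell(x(k),u(k)) - \ell\equb\sum_{k=0}^{N-1}\beta^k + \sum_{k=0}^{N-1}\beta^k\bigl(\lambda(x(k))-\beta\lambda(x(k+1))\bigr).
\]
The final sum telescopes to $\lambda(x_0) - \beta^N\lambda(x(N))$.

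Letting $N\to\infty$ is the key step. Because $\beta\in(0,1)$ and $\lambda$ is assumed bounded, $\beta^N\lambda(x(N))\to 0$, and $\sum_{k=0}^{N-1}\beta^k\to 1/(1-\beta)$. Hence
\[
\widetilde{J}_\infty(x_0,u) \;=\; J_\infty(x_0,u) \;-\; \frac{\ell\equb}{1-\beta} \;+\; \lambda(x_0),
\]
where the two subtracted terms depend only on $x_0$ and the problem data, not on $u$. Therefore $\arg\min_u \widetilde{J}_\infty(x_0,u)=\arg\min_u J_\infty(x_0,u)$, which proves that the optimal trajectories of \eqref{dis OCP} and \eqref{mod OCP} agree.

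The only subtle point is the passage to the limit in the telescoping term; this is precisely where boundedness of $\lambda$ (rather than only boundedness from below, as required in Definition~\ref{def:discdiss}) is used. Apart from that, the argument is a direct algebraic manipulation together with the dissipation inequality, so I expect no further obstacles.
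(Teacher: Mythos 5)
Your proof is correct and follows essentially the same route as the paper: both establish the identity $\widetilde{J}_\infty(x_0,u)=J_\infty(x_0,u)-\ell\equb/(1-\beta)+\lambda(x_0)$ via the telescoping sum, using boundedness of $\lambda$ and $\beta\in(0,1)$ to kill the tail term $\beta^N\lambda(x(N))$, and read off positive definiteness directly from the dissipation inequality with the given supply rate. The only cosmetic difference is that the paper additionally notes $\lambda(\xb)=0$ gives $\tell\equb=0$, but this does not change the substance of the argument.
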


\begin{proof}
We rearrange
\begin{align*}
\widetilde{J}_\infty(x_0,u)&=\sum_{k=0}^\infty\beta^k\tell(x(k,x_0),u(k))\\
&=\sum_{k=0}^\infty \beta^k\left(\ell(x(k,x_0),u(k))-\ell\equb+\lambda(x(k,x_0))-\beta\lambda(x(k+1,x_0))\right)
\end{align*}
and a straightforward calculation shows that
\begin{equation}\label{eq: Jtilde}
\widetilde{J}_\infty(x_0,u)=J_\infty(x_0,u)-\dfrac{\ell\equb}{1-\beta}+\lambda(x_0)-\lim_{k\to\infty}\beta^k\lambda(x_u(k)).
\end{equation}
Since $\lambda$ is bounded and $\beta\in(0,1)$, the last limit exists and is equal to 0. Hence, the objectives differ only by expressions which are independent of $U$, from which the identity of the optimal trajectories immediately follows. The positive definiteness of $\tell$ follows from its definition, using strict dissipativity and the fact that $\lambda(\xb)=0$ implies $\tell\equb=0$. 
\end{proof}

\begin{Bem}
The requirement that $\ell\equb=0$ is the reason for imposing $\lambda(\xb)=0$ as a condition in Definition \ref{def:discdiss}. Readers familiar with dissipativity for undiscounted problems will know that in the undiscounted case $\lambda(\xb)=0$ can be assumed without loss of generality, since if $\lambda$ is a storage function then $\lambda + c$ is a storage function for all $c\in\R$. In the discounted case, this invariance with respect to addition of constants no longer holds.
\end{Bem}

\subsection{The global turnpike property}

In the non-discounted setting it is known that strict dissipativity (together with suitable regularity assumptions on the problem data) implies that optimal as well as near-optimal trajectories exhibit the turnpike property. In the discounted setting, it was observed already in \cite{Gruene2017} that for merely near-optimal trajectories the turnpike property can only be guaranteed on a finite discrete interval $\{0,\ldots,M\}$. Here $M$ depends on the deviation from optimality (denoted by $\delta$ in the following theorem) and tends to infinity as this distance tends to 0. Exactly the same happens here. As the following theorem shows, under the assumption of global discounted dissipativity we obtain precisely the turnpike property from \cite[Definition 4.2]{Gruene2017}.

\begin{thm}\label{th: disturninf}
Consider the infinite horizon optimal control problem \eqref{dis OCP} with discount factor $\beta\in(0,1)$. 
Assume that the optimal value function $\widetilde V_\beta$ of the modified problem satisfies $\widetilde V_\infty(x) \le \alpha_V(\|x-x^\beta\|)$ and
\begin{eqnarray} \widetilde V_\infty(x) \le C \inf_{u\in\U}\tilde \ell(x,u)\label{eq:Cdiss}\end{eqnarray}
for all $x\in\X$, a function $\alpha_V\in\mathcal{K}_\infty$, and a constant $C\ge 1$ satisfying 
\begin{eqnarray} C < 1/(1-\beta) \label{eq:Cbetadiss}.\end{eqnarray}%
Then the optimal control problem has the following turnpike property (cf.\ \cite[Definition 4.2]{Gruene2017}):

For each $\eps>0$ and each bounded set $\X_b\subset \X$ there exist a constant $P>0$ such that for each $M\in\N$ there is a $\delta>0$, such that for all $x_0\in\X_b$ and $u\in\U^\infty(x_0)$ with $J_\infty(x_0,u)\leq V_\infty(x_0)+\delta$, the set $\mathcal{Q}(x_0,u,\eps,M,\beta):=\{k\in\{0,\dots, M\}\mid\|x_u(k,x_0)-x^\beta\|\geq \eps\}$ has at most $P$ elements.
\end{thm}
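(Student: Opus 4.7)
The strategy is to switch to the rotated problem from Lemma \ref{prop: traj}, derive a perturbed geometric-decay inequality for $\widetilde V_\infty$ along the near-optimal trajectory, and then count the time steps at which $\widetilde V_\infty(x(k))$ can still be large. Formula \eqref{eq: Jtilde} shows that $\widetilde J_\infty$ and $J_\infty$, as well as $\widetilde V_\infty$ and $V_\infty$, differ by the same $u$-independent quantity, so the hypothesis $J_\infty(x_0,u) \leq V_\infty(x_0) + \delta$ is equivalent to $\widetilde J_\infty(x_0,u) \leq \widetilde V_\infty(x_0) + \delta$. On $\X_b$ we have $\widetilde V_\infty(x_0) \leq \alpha_V(D)$ with $D := \sup_{x \in \X_b}\|x-x^\beta\|$, and since $\tell(x,u) \geq \alpha(\|x-x^\beta\|)$ and all quantities are nonnegative, also $\widetilde V_\infty(x) \geq \alpha(\|x-x^\beta\|)$; hence any bound of the form $\widetilde V_\infty(x(k)) < \alpha(\varepsilon)$ will force $\|x(k)-x^\beta\|<\varepsilon$.

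For the key recursion I introduce the single-step suboptimality
\[
\delta_k := \tell(x(k),u(k)) + \beta\widetilde V_\infty(x(k+1)) - \widetilde V_\infty(x(k)) \geq 0.
\]
Telescoping $\sum \beta^k \delta_k$ gives $\widetilde J_\infty(x_0,u) - \widetilde V_\infty(x_0) \leq \delta$ once I check that $\beta^k\widetilde V_\infty(x(k)) \to 0$, which follows from \eqref{eq:Cdiss} together with summability of $\beta^k\tell(x(k),u(k))$; in particular $\beta^k\delta_k \leq \delta$ for every $k$. Combining the definition of $\delta_k$ with $\tell(x(k),u(k)) \geq \widetilde V_\infty(x(k))/C$, which is \eqref{eq:Cdiss} applied at the point $(x(k),u(k))$, yields
\[
\widetilde V_\infty(x(k+1)) \leq \gamma\,\widetilde V_\infty(x(k)) + \delta_k/\beta, \qquad \gamma := \frac{1-1/C}{\beta},
\]
and the hypothesis \eqref{eq:Cbetadiss} is exactly what makes $\gamma<1$.

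Unfolding this recursion and substituting $\delta_j \leq \delta/\beta^j$ leads, via a standard geometric-series bound, to an estimate of the form
\[
\widetilde V_\infty(x(k)) \leq \gamma^k\,\alpha_V(D) + c\,\delta/\beta^k
\]
with a constant $c$ depending only on $C$. I then choose $P\in\N$ large enough that $\gamma^P\alpha_V(D) < \alpha(\varepsilon)/2$; crucially this $P$ depends only on $\varepsilon$, $\X_b$, $\beta$ and $C$, and not on $M$ or $\delta$. Given $M$, I pick $\delta>0$ so small that $c\,\delta/\beta^M < \alpha(\varepsilon)/2$. Then for every $k\in\{P,\ldots,M\}$ both terms are below $\alpha(\varepsilon)/2$, so $\widetilde V_\infty(x(k))<\alpha(\varepsilon)$, and hence $\|x(k)-x^\beta\|<\varepsilon$. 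This gives $\QQ(x_0,u,\varepsilon,M,\beta) \subseteq \{0,\ldots,P-1\}$ and $|\QQ| \leq P$, as claimed.

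The main delicate point is decoupling the ``geometric'' part of the estimate, which is controlled by the $M$-independent integer $P$, from the ``perturbative'' part, which blows up like $\beta^{-k}$ and thus forces $\delta$ to depend on $M$. Verifying $\gamma<1$ uses exactly \eqref{eq:Cbetadiss} and verifying $\beta^k\widetilde V_\infty(x(k)) \to 0$ uses \eqref{eq:Cdiss}; apart from these two observations, the argument is bookkeeping with the Bellman inequality and the rotation identity \eqref{eq: Jtilde}.
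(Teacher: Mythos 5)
Your proof is correct, and its core is the same perturbed decay inequality as in the paper: your recursion $\widetilde V_\infty(x(k+1)) \le \gamma\,\widetilde V_\infty(x(k)) + \delta_k/\beta$ with $\gamma = (1-1/C)/\beta$ is algebraically identical to the paper's inequality \eqref{eq:case2} (there written with $\kappa=(1-\beta)-1/C$, and $1+\kappa/\beta=\gamma$), and your bound $\delta_k\le\delta/\beta^k$ on the Bellman residuals is the same information as the paper's inductively propagated estimate $\widetilde J_\infty(x_u(k,x_0),u(\cdot+k))\le \widetilde V_\infty(x_u(k,x_0))+\delta/\beta^k$; your telescoping argument, including the verification that $\beta^k\widetilde V_\infty(x(k))\to 0$ via \eqref{eq:Cdiss}, is a clean way to obtain it. Where you genuinely differ is the endgame: the paper at this point invokes the practical Lyapunov function result \cite[Theorem 2.4]{Gruene2014} restricted to $\{0,\ldots,M\}$ to produce the bound $\|x_u(k,x_0)-x^\beta\|\le \eta_k+\gamma(\delta/\beta^M)$, whereas you unfold the linear recursion explicitly, obtaining $\widetilde V_\infty(x(k))\le \gamma^k\alpha_V(D)+C\delta/\beta^k$ (your constant $c$ is exactly $C$, since $1/(1-\gamma\beta)=C$), and then convert this into the state estimate via the lower bound $\widetilde V_\infty(x)\ge\alpha(\|x-x^\beta\|)$, which holds here by global strict dissipativity and nonnegativity of $\tell$ (the paper records this bound explicitly only in the local setting, Lemma \ref{lem:lbound}, but uses it implicitly through the Lyapunov theorem). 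Your route buys a self-contained argument with fully explicit constants — $P$ from $\gamma^P\alpha_V(D)<\alpha(\eps)/2$ and $\delta$ from $C\delta/\beta^M<\alpha(\eps)/2$ — and makes transparent why $P$ is independent of $M$ while $\delta$ is not; the paper's route is more modular, reusing a general practical-asymptotic-stability theorem, which is convenient when the same Lyapunov structure is recycled later (as in Theorems \ref{thm: localturn} and \ref{th: locturnpike}).
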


\begin{proof}
It follows from the proof of Lemma \ref{prop: traj} that the inequality $J_\infty(x_0,u)\leq V_\infty(x_0)+\delta$ implies $\widetilde J_\infty(x_0,u)\leq \widetilde V_\infty(x_0)+\delta$. Together with the dynamic programming principle for $\widetilde V_\infty$ this yields
\begin{eqnarray*} \delta & \geq & \widetilde J_\infty(x_0,u) - \widetilde V_\infty(x_0) \\
& = & \tilde \ell(x_0,u(0)) + \beta \widetilde J_\infty(x_u(1,x_0),u(\cdot+1)) - \inf_{u\in \U}\left\{ \tilde \ell(x_0,u) + \beta \widetilde V_\infty(f(x_0,u))\right\} \\
& \geq & \tilde \ell(x_0,u(0)) + \beta \widetilde J_\infty(x_u(1,x_0),u(\cdot+1)) 
- \left( \tilde \ell(x_0,u(0)) + \beta \widetilde V_\infty(f(x_0,u(0)))\right)\\
& = & \beta(\widetilde J_\infty(x_u(1,x_0),u(\cdot+1)) - \widetilde V_\infty(f(x_0,u(0)))).
\end{eqnarray*}
This implies $\widetilde J_\infty(x_u(1,x_0),u(\cdot+1))\leq \widetilde V_\infty(x_u(1,x_0))+\delta/\beta$, and proceding inductively we obtain
\[ \widetilde J_\infty(x_u(k,x_0),u(\cdot+k))\leq \widetilde V_\infty(x_u(k,x_0))+\frac{\delta}{\beta^k} \]
for all $k\in\N$. 
This implies 
\begin{eqnarray} 
\lefteqn{\widetilde V_\infty(x_u(k+1,x_0)) - \widetilde V_\infty(x(k,x_0)) } & & \nonumber \\
& = & \frac{1}{\beta}\Big(\beta \widetilde V_\infty(x_u(k+1,x_0)) - \beta \widetilde V_\infty(x_u(k,x_0))\Big)\nonumber\\
& = & \frac{1}{\beta}\Big(\beta \widetilde V_\infty(x_u(k+1,x_0)) - \widetilde V_\infty(x_u(k,x_0)) + (1-\beta) \widetilde V_\infty(x_u(k,x_0))\Big)\nonumber\\
& \leq & \frac{1}{\beta}\Big(\beta \widetilde J_\infty(x_u(k+1,x_0),u(\cdot+k+1)) - \widetilde J_\infty(x_u(k,x_0),u(\cdot+k)) \nonumber\\
&&+ (1-\beta) \widetilde V_\infty(x_u(k,x_0))\Big) + \frac{\delta}{\beta^{k+1}}\nonumber\\
& = & \frac{1}{\beta}\Big(-\tilde\ell(x_u(k,x_0),u(k)) + (1-\beta) \widetilde V_\infty(x_u(k,x_0))\Big)+\frac{\delta}{\beta^{k+1}}\nonumber\\
& \leq & \frac{1}{\beta}\Big(-\frac{1}{C} \widetilde V_\infty(x_u(k,x_0))+ (1-\beta) \widetilde V_\infty(x_u(k,x_0))\Big) + \frac{\delta}{\beta^{k+1}}\nonumber\\
& = & \frac{\kappa}{\beta} \widetilde V_\infty(x_u(k,x_0)) +\frac{\delta}{\beta^{k+1}} \label{eq:case2}
\end{eqnarray}
where $\kappa = (1-\beta)-1/C < 0$ because of \eqref{eq:Cbetadiss}. 

This implies that for fixed $M\in\N$ and $k\in\{0,\ldots,M\}$ the function $\widetilde V_\infty$ is a practical Lyapunov function. Using \cite{Gruene2014} Theorem 2.4 restricted to $\{0,\ldots,M\}$ and the fact that $\X_b$ is bounded we can conclude that there is a sequence $\eta_k\to 0$ (depending on $\X_b$) and a function $\gamma\in\mathcal{K}_\infty$ with 
\[ \|x_u(k,x_0)-x^\beta\| \le \eta_k + \gamma(\delta/\beta^{k+1}) \leq \eta_k + \gamma(\delta/\beta^M)\]
for all $k\in\{0,\ldots,M\}$. 
This implies the desired claim by choosing $P\in\N$ (depending on $\eps$ and $\eta_k$, hence on $\X_b$) such that $\eta_k < \eps/2$ for all $k\ge P$ and $\delta>0$ (depending on $\beta$, $\eps$ and $M$) such that $\gamma(\delta/\beta^M)<\eps/2$.
\end{proof}

For an illustration of the described turnpike property we refer to Fig. \ref{figure}. We note again that in the formulation of the discounted turnpike property the level $\delta$ which measures the deviation from optimality of the trajectory $x_u(\cdot,x0)$ depends on $M$. For guaranteeing the turnpike property on $\{0,\ldots,M\}$, $\delta\to 0$ may be required if $M\to\infty$, cf.\ also Remark \ref{rem:suffcond} (iv).

\begin{figure}[htb]
\begin{center}
\includegraphics[width= 0.5\textwidth]{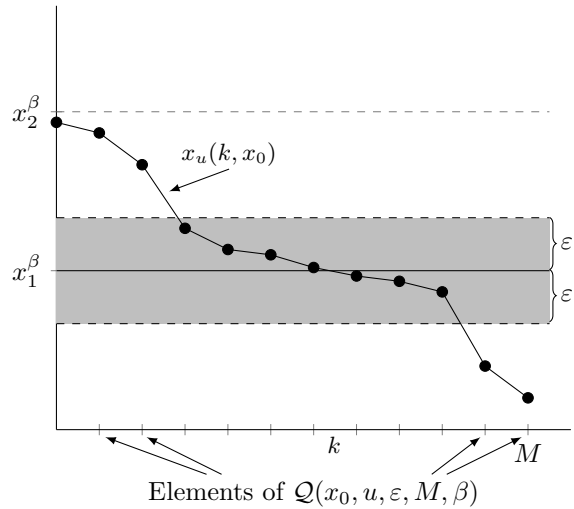}
\caption{Illustration of the set $\mathcal{Q}(x_0,u,\eps,M,\beta)$}\label{figure}
\end{center} 
\end{figure}

The following remark discusses aspects of the assumptions of Theorem \ref{th: disturninf}. For the turnpike property to hold, it is obviously necessary that the state of the system can be steered to $\xb$, at least asymptotically. This is made precise in part (i) of the remark. Part (ii) shows that if the state can be steered to $\xb$ fast enough, then a constant $C$ satisfying \eqref{eq:Cdiss} for all $\beta\in(0,1)$ holds. Finally, part (iii) of the remark discusses how inequality \eqref{eq:Cdiss} can be relaxed if such a $C$ cannot be found.

\begin{Bem} \begin{itemize}
\item[(i)] A necessary condition for the turnpike property to hold is that for each $\eps>0$, each bounded subset $\X_b\subseteq\X$ and each $x_0\in\X_b$ there exists a control sequence $u\in \U^{P+1}(x_0)$ with $x_u(k,x_0)\in \BB_\eps(\xb)$ for some $k\le P+1$, where $P$ is the constant from the turnpike property in Theorem \ref{th: disturninf}. This is immediately clear, because if such a control does not exist, then the number of points $x_u(k,x_0)\not\in\QQ(x_0,u,\eps,M,\beta)$ is larger than $P$ for all $u$.
\item[(ii)] If a constant $C$ satisfying \eqref{eq:Cdiss} for all $\beta\in(0,1)$ exists, then \eqref{eq:Cbetadiss} will hold for all sufficiently large $\beta\in(0,1)$. A sufficient condition for the existence of such a $C$ is the following exponential stabilizability assumption of the cost at the equilibrium $\equb$: there are constants $\sigma,\lambda>0$ such that for each $x_0\in\X$ there is $u\in\U^\infty(x_0)$ with
\begin{equation} \tell(x_u(k,x_0),u(k)) \le \sigma e^{-\lambda k}\inf_{\hat u \in \U} \tell(x_0,\hat u). \label{eq:expcost}\end{equation}
Then, since $\tell\ge 0$ we obtain 
\begin{align*} \widetilde V_\infty(x_0) & \le \sum_{k=0}^\infty \beta^k\tell(x_u(k,x_0),u(k)) \le \sum_{k=0}^\infty \tell(x_u(k,x_0),u(k))\\
& \le \sum_{k=0}^\infty \sigma e^{-\lambda k}\inf_{\hat u \in \U} \tell(x_0,\hat u) = \frac{\sigma}{1-e^{-\lambda}} \inf_{\hat u \in \U} \tell(x_0,\hat u), \end{align*}
implying \eqref{eq:Cdiss} with $C=\sigma/(1-e^{-\lambda})$. We note that \eqref{eq:expcost} holds in particular if the system itself is exponentially stabilizable to $\xb$ with exponentially bounded controls and $\tell$ is a polynomial\footnote{We could further relax this assumption to $\tell$ being bounded by $C_1 P$ and $C_2 P$ from below and above, respectively, for constants $C_1>C_2>0$ and a polynomial $P$.}. Exponential stabilizability of the system, in turn, follows locally around $\xb$ from stabilizability of its linearization in $\xb$. If, in addition, the necessary condition from part (i) of this remark holds, then local exponential stabilizability implies exponential stabilizability for bounded $\X$. We refer to \cite[Section 6]{GGHKW18} for a more detailed discussion on these conditions.
\item[(iii)] If a $C$ meeting \eqref{eq:Cbetadiss} and \eqref{eq:Cdiss} for all $x\in\X$ does not exist, then we may still be able to find a $C$ satisfying \eqref{eq:Cbetadiss} and \eqref{eq:Cdiss} for all $x\in\X$ with $\vartheta \le \|x-x^\beta\| \le \Theta$, for parameters $0\le\vartheta<\Theta$. In this case we can follow the reasoning in the proof of Corollary 4.3 from \cite{GGHKW18} to conclude that we still obtain a turnpike property for $\eps>\eps_0$ and $\X_b=\BB_\Delta(\xb)\cap \X$, with $\eps_0\to 0$ as $\vartheta\to 0$ and $\Delta\to\infty$ as $\Theta\to\infty$. 
\item[(iv)] Optimal trajectories, i.e., trajectories for which $J_\infty(x_0,u)=V_\infty(x_0)$ holds, satisfy the assumptions of Theorem \ref{th: disturninf} for each $\delta>0$. Hence, the assertion of the theorem holds for each $\eps>0$ and each $M\in\N$, implying that $x_u(k,x_0)$ converges to $x^\beta$ as $k\to\infty$.
\end{itemize}
\label{rem:suffcond}
\end{Bem}

\section{The local discounted turnpike property assuming invariance}\label{sec:local}
In the previous section, we have shown that an equilibrium at which the system is globally strictly dissipative has the turnpike property. Now, we consider an equilibrium denoted by $\equbl$ at which discounted strict dissipativity holds only locally, i.e., for all $x$ in a neighbourhood $\X_{\NN}$ of $\xl$, in the following sense.
\begin{defn}
Given a discount factor $\beta\in(0,1)$, we say that the system \eqref{eq: nsys} is locally discounted strictly dissipative at an equilibrium $\equbl$ with supply rate $s:\Y\to\R$ if there exists a storage function $\lambda:\X\to\R$ bounded from below with $\lambda(\xl)=0$ and a class $\mathcal{K}_\infty$-function $\alpha_\beta$ such that the inequality
\begin{equation}\label{ineq: dis.diss}
	s(x,u)+\lambda(x)-\beta\lambda(f(x,u))\geq \alpha_\beta(\|x-x_l^\beta\|)
\end{equation}
holds for all $(x,u)\in\X_\mathcal{N}\times \U$.

Further, we say that system \eqref{eq: nsys} is locally discounted strictly $(x,u)$-dissipative at the equilibrium $\equbl$ with supply rate $s:\X\times \U\to\R$ if the same holds with the inequality
\begin{equation}
	s(x,u)+\lambda(x)-\beta\lambda(f(x,u))\geq \alpha_\beta(\|(x-x_l^\beta\|+\| u-u_l^\beta)\|).
\end{equation}
\label{def:ldiss}\end{defn}
As in the global case we define the rotated stage cost by
\begin{equation}\label{eq: rotstcost b}
	\tell(x,u) := \ell(x,u)-\ell\equbl +\lambda(x)-\beta\lambda(f(x,u)).
\end{equation}%
Obviously, with this definition Lemma \ref{prop: traj} remains valid. Moreover, for $x\in\X_{\NN}$ the function $\tell$ satisfies the same properties as in the globally dissipative case. This will enable us to derive a local turnpike property, provided the neighbourhood $\X_{\NN}$ contains an invariant set $\X_{inv}\subset \X_{\NN}$ for the optimally controlled system. The following lemma gives a consequence of this assumption for the modified optimal value function, which will be important for concluding the local turnpike property.

\begin{lem}\label{lem:lbound} Consider the optimal control problem \eqref{dis OCP} with given discount factor $\beta\in (0,1)$ and assume that the system is locally strictly dissipative in $\equbl\in\X_\mathcal{N}\subset \X$. Consider a subset $\X_{inv}\subset\X_\mathcal{N}$ such that all optimal solutions $x^*(k,x_0)$ with $x_0\in\X_{inv}$ satisfy $x^*(k,x_0) \in \X_{inv}$ for all $k\ge 0$.

Then the modified optimal value function $\widetilde V_\infty$ satisfies
\begin{equation} \widetilde V_\infty(x) \ge \alpha_\beta(\|x-x_l^\beta\|) \label{eq:lbound}
\end{equation}
for all $x\in\X_{inv}$. 
\end{lem}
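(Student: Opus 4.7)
The plan is to evaluate $\widetilde V_\infty(x)$ along an optimal trajectory starting in $\X_{inv}$, and to use the invariance assumption to conclude that the entire trajectory lies in the region $\X_\NN$ where the rotated stage cost $\tell$ is nonnegative. Concretely, I would fix $x\in\X_{inv}$, pick an optimal control $u^*$ with associated optimal trajectory $x^*(\cdot,x)$, and invoke the local version of Lemma \ref{prop: traj} (whose proof only uses boundedness of $\lambda$ and $\beta\in(0,1)$, both of which still hold here) to write
\[
\widetilde V_\infty(x) \;=\; \widetilde J_\infty(x,u^*) \;=\; \sum_{k=0}^\infty \beta^k\,\tell\bigl(x^*(k,x),u^*(k)\bigr).
\]

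Next, I would use the invariance hypothesis: $x^*(k,x)\in\X_{inv}\subset\X_\NN$ for every $k\ge 0$. On $\X_\NN\times\U$ the local strict dissipativity inequality \eqref{ineq: dis.diss} together with $\lambda(\xl)=0$ and the definition \eqref{eq: rotstcost b} of $\tell$ yields
\[
\tell\bigl(x^*(k,x),u^*(k)\bigr) \;\ge\; \alpha_\beta\bigl(\|x^*(k,x)-\xl\|\bigr) \;\ge\; 0,
\]
so every term of the series is nonnegative. Keeping only the $k=0$ term then gives
\[
\widetilde V_\infty(x) \;\ge\; \tell(x,u^*(0)) \;\ge\; \alpha_\beta(\|x-\xl\|),
\]
which is precisely \eqref{eq:lbound}.

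The main subtlety is the very first step: without invariance the optimal trajectory could leave $\X_\NN$, and outside this neighbourhood the rotated cost $\tell$ is not known to have a sign, so individual summands could be negative and one could no longer bound the whole series from below by its initial term. Thus the lemma is really a clean conversion of the invariance assumption into a lower bound for $\widetilde V_\infty$; the genuine work, i.e.\ producing a nontrivial set $\X_{inv}$, is deferred to the subsequent section of the paper.
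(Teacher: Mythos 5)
Your proof is correct and follows essentially the same route as the paper: expand $\widetilde V_\infty$ as the discounted sum of $\tell$ along the optimal trajectory, use invariance to place every point in $\X_\NN$ so that each term is bounded below by $\alpha_\beta(\|x^*(k,x)-\xl\|)\ge 0$, and retain the $k=0$ term. Your explicit appeal to the local version of Lemma \ref{prop: traj} and to the invariance hypothesis only makes precise what the paper's shorter proof uses implicitly.
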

\begin{proof}
For all $x\in\X_\mathcal{N}$ and $u\in\U$ the modified cost satisfies $\tilde\ell(x,u) \ge \alpha_\beta(\|x-x_l^\beta\|)\ge 0$. This implies
\[ \widetilde V_\infty(x_0) = \sum_{k=0}^\infty \beta^k\tilde\ell(x(k,x_0),u(k)) \ge \sum_{k=0}^\infty \beta^k\alpha_\beta(\|x(k,x_0)-x_l^\beta\|) \ge \alpha_\beta(\|x_0-x_l^\beta\|),\]
which shows the claim.
\end{proof}

The following now gives a local version of Theorem \ref{th: disturninf}.

\begin{thm}\label{thm: localturn}
Consider the infinite horizon optimal control problem \eqref{dis OCP} with discount factor $\beta\in(0,1)$ and assume that the system is locally strictly dissipative at $\equbl\in\X_\mathcal{N}\subset \X$. Consider a subset $\X_{inv}\subset\X_\mathcal{N}$ such that all optimal solutions $x^*(k,x_0)$ with $x_0\in\X_{inv}$ satisfy $x^*(k,x_0) \in \X_{inv}$ for all $k\ge 0$ and suppose that the assumptions of Theorem \ref{th: disturninf} hold for all $x \in \X_{inv}$.

Then the optimal control problem has the following turnpike property on $\X_{inv}$:

For each $\eps>0$ and each bounded set $\X_b\subset \X_{inv}$ there exist a constant $P>0$ such that for each $M\in\N$ there is a $\delta>0$, such that for all $u\in\U^\infty(x_0)$ with $J_\infty(x_0,u)\leq V_\infty(x_0)+\delta$ and $x_u(k,x_0)\in\X_{inv}$ for all $k\in\{0,\ldots,M\}$, the set $\mathcal{Q}(x,u,\eps,M,\beta):=\{k\in\{0,\dots, M\}\mid\|x_u(k,x_0)-x^\beta\|\geq \eps\}$ has at most $P$ elements.
\end{thm}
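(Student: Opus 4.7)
The strategy is to replicate the proof of Theorem~\ref{th: disturninf} along trajectories that remain inside $\X_{inv}$ on $\{0,\ldots,M\}$, and then to convert the resulting decay of the modified optimal value function $\widetilde V_\infty$ into a bound on $\|x_u(k,x_0)-x_l^\beta\|$ using the lower bound of Lemma~\ref{lem:lbound}.

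First, note that the identity \eqref{eq: Jtilde} derived in the proof of Lemma~\ref{prop: traj} does not rely on dissipativity but only on boundedness of $\lambda$, so it remains valid in the local setting; consequently $J_\infty(x_0,u)\le V_\infty(x_0)+\delta$ still implies $\widetilde J_\infty(x_0,u)\le \widetilde V_\infty(x_0)+\delta$. The inductive dynamic-programming argument from the global proof (which uses only the DPP for $\widetilde V_\infty$) then yields
\[
  \widetilde J_\infty(x_u(k,x_0),u(\cdot+k)) \le \widetilde V_\infty(x_u(k,x_0)) + \frac{\delta}{\beta^k}
\]
for every $k\in\{0,\ldots,M\}$. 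Combining this with \eqref{eq:Cdiss} evaluated at $x_u(k,x_0)\in\X_{inv}$ (which is legitimate precisely because of the invariance hypothesis on the trajectory), the same algebra that produced \eqref{eq:case2} delivers
\[
  \widetilde V_\infty(x_u(k+1,x_0))-\widetilde V_\infty(x_u(k,x_0)) \le \frac{\kappa}{\beta}\widetilde V_\infty(x_u(k,x_0))+\frac{\delta}{\beta^{k+1}},
\]
with $\kappa=(1-\beta)-1/C<0$, for all $k\in\{0,\ldots,M-1\}$. Hence $\widetilde V_\infty$ serves as a practical Lyapunov function restricted to this finite window.

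Next, I would invoke Theorem~2.4 of \cite{Gruene2014} on $\{0,\ldots,M\}$, using the upper bound $\widetilde V_\infty(x)\le\alpha_V(\|x-x_l^\beta\|)$ on $\X_{inv}$ together with---and this is the crucial new ingredient---the $\mathcal{K}_\infty$ lower bound $\widetilde V_\infty(x)\ge\alpha_\beta(\|x-x_l^\beta\|)$ provided by Lemma~\ref{lem:lbound}. Boundedness of $\X_b\subset\X_{inv}$ then produces a sequence $\eta_k\to 0$ (depending on $\X_b$) and a $\gamma\in\mathcal{K}_\infty$ with
\[
  \|x_u(k,x_0)-x_l^\beta\| \le \eta_k + \gamma(\delta/\beta^{k+1}) \le \eta_k + \gamma(\delta/\beta^M)
\]
for all $k\in\{0,\ldots,M\}$. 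Choosing $P\in\N$ so that $\eta_k<\eps/2$ for $k\ge P$ and then $\delta>0$ so that $\gamma(\delta/\beta^M)<\eps/2$ gives $\|x_u(k,x_0)-x_l^\beta\|<\eps$ for all $k\in\{P,\ldots,M\}$, which is exactly the required cardinality bound on $\QQ(x_0,u,\eps,M,\beta)$.

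The main obstacle is bookkeeping rather than a new idea: every bound on $\widetilde V_\infty$ (both \eqref{eq:Cdiss} and the $\mathcal{K}_\infty$-upper bound) and every bound on $\tell$ must be justified by the point of evaluation lying in $\X_{inv}$, and the inductive DPP step must be carried out so that none of these bounds is invoked off $\X_{inv}$. A secondary subtlety is that the $\mathcal{K}_\infty$-lower bound on $\widetilde V_\infty$, which is automatic from positivity of $\tell$ in the global case, now relies on forward invariance of $\X_{inv}$; this is exactly the role of Lemma~\ref{lem:lbound}, and it is where the invariance hypothesis $\X_{inv}\subset\X_{\NN}$ enters essentially.
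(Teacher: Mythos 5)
Your proposal is correct and follows essentially the same route as the paper: the paper's proof also consists of rerunning the argument of Theorem \ref{th: disturninf} verbatim, noting that all inequalities remain valid while the trajectory stays in $\X_{inv}$, and invoking Lemma \ref{lem:lbound} to supply the lower bound on $\widetilde V_\infty$ needed for the practical Lyapunov function argument. Your added bookkeeping about where each bound is evaluated is exactly the content the paper compresses into the phrase ``completely identical.''
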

\begin{proof}
The proof proceeds completely identical to the proof of Theorem \ref{th: disturninf}, using the fact that all inqualities used in this proof remain valid as long as the considered solutions stay in $\X_{inv}$ which is guaranteed by the assumptions. We note that Lemma \ref{lem:lbound} is needed for establishing the lower bound on $\widetilde V_\infty$ required from a practical Lyapunov function. 
\end{proof}

\begin{rem} Instead of assuming the existence of the invariant set $\X_{inv}$ we could also assume \eqref{eq:lbound} for all $x\in\X_{\mathcal{N}}$. Then by standard Lyapunov function arguments the largest sublevel set of $\widetilde V_\infty$ contained in $\X_{\mathcal{N}}$ is forward invariant for the optimal solutions and can then be used as set $\X_{inv}$. Using \eqref{eq:case2} we can even ensure that this sublevel set is also forward invariant for all solutions satisfying $J_\infty(x_0,u)\leq V_\infty(x_0)+\delta$ provided $\delta>0$ is sufficiently small. Hence, for this choice of $\X_{inv}$ the assumption that $x_u(k,x_0)\in\X_{inv}$ for all $k\in\{0,\ldots,M\}$ in Theorem \ref{thm: localturn} would be automatically satisfies if $\delta$ is not too large.
\end{rem}

\section{Optimal trajectories stay near a locally dissipative equilibrium}\label{sec:stay}

Theorem 4.3 shows that the local turnpike property holds if the optimal solutions stay in the neighbourhood of $\xl$ in which the strict dissipativity property holds. In this section we show that this condition is "automatically'' satisfied for appropriate discount factors. This will enable us to conclude a local turnpike property from local strict dissipativity. To this end, we aim to show that there exists a range of discount factors $\beta$ for which it is more favourable to stay near the locally dissipative equilibrium than to move to other parts of the state space. The first lemma we need to this end shows a property of trajectories that move out of a neighbourhood of $\xl$. In contrast to the previous result, now we need the stronger $(x,u)$-dissipativity.

\begin{lem}\label{lem: ball}
	Consider a discounted optimal control problem \eqref{dis OCP} subject to system \eqref{eq: nsys} with continuous $f$. Assume local strict $(x,u)$-dissipativity at an equilibrium $\equbl$ according to Definition \ref{def:ldiss} and let $\rho>0$ be such that $\mathcal{B}_\rho(x_l^\beta)\subset\X_{\NN}$ holds for the neighbourhood $\X_{\NN}$ from Definition \ref{def:ldiss}. Then there exists $\eta >0$ such that for each $K\in\N$ and any trajectory $x(\cdot)$ with $x_0=x(0)\in\BB_\eta(\xl)$ and $x(K)\notin \mathcal{B}_\rho(x_l^\beta)$ there is a $M\in\{0,\dots, K-1\}$ such that $x(0),\ldots,x(M)\in \mathcal{B}_\eta(\xl)$ and either 
\[
	\text{(i) } x(M)\in\mathcal{B}_\rho(x_l^\beta)\backslash\mathcal{B}_\eta(x_l^\beta)
\quad \text{or} \quad
\text{(ii) } \|u(M)-u_l^\beta\|\geq \eta\]
holds.
\end{lem}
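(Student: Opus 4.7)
The plan is a short continuity argument at the equilibrium $\equbl$, combined with a case split on the size of the control deviation at the last ``good'' time step. The only property of $f$ that will be needed, beyond the equilibrium identity $f(\xl,\ul)=\xl$, is the continuity hypothesis; the dissipativity inequality itself plays no role in this lemma.

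First, I would use continuity to fix $\eta$: choose $\eta\in(0,\rho)$ small enough that
\[\|x-\xl\|<\eta \text{ and } \|u-\ul\|<\eta \;\Longrightarrow\; \|f(x,u)-\xl\|<\rho.\]
This single $\eta$ is then declared in the conclusion. Given any trajectory with $x(0)\in\BB_\eta(\xl)$ and $x(K)\notin\BB_\rho(\xl)$, I would set
\[M^\star := \min\{k\in\{1,\dots,K\}\mid x(k)\notin\BB_\eta(\xl)\},\]
which is well defined because $\BB_\eta(\xl)\subseteq\BB_\rho(\xl)$ forces $x(K)\notin\BB_\eta(\xl)$, and satisfies $1\le M^\star\le K$ since $x(0)\in\BB_\eta(\xl)$. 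By minimality, $x(0),\dots,x(M^\star-1)\in\BB_\eta(\xl)$.

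The decisive case split is on $u(M^\star-1)$. If $\|u(M^\star-1)-\ul\|\ge \eta$, I would take $M:=M^\star-1$: the required inclusion $x(0),\dots,x(M)\in\BB_\eta(\xl)$ is already recorded, and option (ii) holds. Otherwise both $x(M^\star-1)\in\BB_\eta(\xl)$ and $\|u(M^\star-1)-\ul\|<\eta$, so the choice of $\eta$ in the first step yields $x(M^\star)=f(x(M^\star-1),u(M^\star-1))\in\BB_\rho(\xl)$. Since $x(M^\star)\notin\BB_\eta(\xl)$ by minimality, this places $x(M^\star)$ in the annulus $\BB_\rho(\xl)\setminus\BB_\eta(\xl)$, and taking $M:=M^\star$ delivers option (i). The case $M^\star=K$ is excluded in this branch, for it would re-place $x(K)$ inside $\BB_\rho(\xl)$, contradicting the hypothesis; hence $M\le K-1$ in either case.

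The argument is elementary: the main (minor) obstacle is really just index bookkeeping, compounded by a small inconsistency in the statement of option (i), which as printed requires $x(M)\in\BB_\eta(\xl)$ and $x(M)\in\BB_\rho(\xl)\setminus\BB_\eta(\xl)$ simultaneously. The natural reading realized above is that the condition $x(0),\dots,x(M)\in\BB_\eta(\xl)$ applies to option (ii), while for (i) one has $x(0),\dots,x(M-1)\in\BB_\eta(\xl)$ with $x(M)$ just stepping into the annulus. The $(x,u)$-dissipativity will only enter the picture in the next steps of the paper, where the index $M$ produced by this lemma is converted into a cost lower bound through $\alpha_\beta$ applied either to $\|x(M)-\xl\|$ or to $\|u(M)-\ul\|$.
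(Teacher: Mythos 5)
Your argument is correct, and it is in essence the same continuity argument as in the paper: fix $\eta\le\rho$ from continuity of $f$ at $\equbl$, locate the last index before the trajectory has left a ball around $\xl$, and observe that at that index either the state has already entered the annulus or the control must deviate by at least $\eta$. The only real difference is bookkeeping: the paper takes $M=K_{\min}-1$, where $K_{\min}$ is the first exit time from $\BB_\rho(\xl)$, and derives the dichotomy (i)/(ii) by contradiction, whereas you take the first exit time from $\BB_\eta(\xl)$ and do a direct case split on the control, choosing $M=M^\star-1$ or $M=M^\star$ accordingly. Your variant has the advantage that it actually delivers the clause $x(0),\ldots,x(M)\in\BB_\eta(\xl)$ in case (ii) and $x(0),\ldots,x(M-1)\in\BB_\eta(\xl)$ in case (i), while the paper's choice of $M$ only guarantees $x(0),\ldots,x(M)\in\BB_\rho(\xl)$; and you are right that the statement as printed is internally inconsistent in case (i), since $x(M)$ cannot lie in $\BB_\eta(\xl)$ and in $\BB_\rho(\xl)\setminus\BB_\eta(\xl)$ simultaneously. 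This flaw is harmless for the paper's purposes: in the proof of Lemma \ref{lem: optimal} only $x(0),\ldots,x(M)\in\X_{\NN}$ (to get $\tell\ge 0$ there) and the lower bound $\tell(x(M),u(M))\ge\alpha_\beta(\eta)$ via (i) or (ii) are used, and both your reading and the paper's $M=K_{\min}-1$ provide exactly that, since $\BB_\rho(\xl)\subset\X_{\NN}$. Your exclusion of $M^\star=K$ in the case-(i) branch is also the right way to secure $M\le K-1$.
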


\begin{proof}
	The continuity of $f$ implies that there exists $\eps>0$ such that $\|f(x,u)-x_l^\beta\|<\rho$ for all $(x,u)\in\Y$ with $\|x-x_l^\beta\|<\eps$ and $\|u-u_l^\beta\|<\eps$. We let $K_{\min}$ be minimal with $x(K_{\min})\notin\mathcal{B}_\rho(x_l^\beta)$, set $\eta:=\min\{\eps, \rho\}$, and claim that this implies the assertion for $M=K_{\min}-1$.
	
	We prove this claim by contradiction. To this end, we assume that for $M=K_{\min}-1$ neither assertion (i) nor assertion (ii) holds. This implies on the one hand that $\|x(M)-x_l^\beta\|<\eta$, since $x(M)\in\BB_\rho(\xl)$ by minimality of $K_{\min}$ and (i) is not fulfilled. On the other hand, it implies $\|u(M)-u_l^\beta\|<\eta$, because (ii) does not hold. Then, however, since $\eta \le \eps$, the continuity of $f$ implies
	\[\|x(K_{\min})-x_l^\beta\| =  \|f(x(M),u(M))-x_l^\beta\|< \rho.\]
	This means that $x(K_{\min})\in\BB_\rho(\xl)$, which is a contradiction to the choice of $K_{\min}$. Hence, either assertion (i) or assertion (ii) must hold for $M=K_{\min}-1$.
\end{proof}

The next lemma shows that the behavior characterized in Lemma \ref{lem: ball} induces a lower bound for the rotated discounted functional $\widetilde{J}_\infty$ from \eqref{mod OCP} along trajectories that start in a neighborhood of $\xl$ and leave this neighborhood. To this end, we note that even if merely local strict dissipativity holds, the modified stage cost $\tell$ from \eqref{eq: rotstcost b}  is well defined, since $\lambda$ is defined for all $x\in\X$. However, the inequality $\tell(x,u) \ge \alpha_\beta(\|x-\xl\|+\|u-\ul\|)$ and, more generally, positivity of $\tell$ are only guaranteed for $x\in\X_{\NN}$. 

\begin{lem}\label{lem: optimal}
Let the assumptions of Lemma \ref{lem: ball} hold. In addition, assume that $\lambda$ from Definition \ref{def:ldiss} is bounded and the stage cost $\ell$ is bounded from below. Then, there exists $\beta^\star\in(0,1)$ with the following property: for any $\beta\in(0,\beta^\star)$ and any $K\in\N$ there is $\sigma(\beta,K)>0$ such that for any trajectory $x(\cdot)$ with $x_0=x(0)\in\BB_\eta(\xl)$ and $x(P)\notin \mathcal{B}_\rho(x_l^\beta)$ for some $P\in\{1,\ldots,K\}$ the inequality
\begin{equation} \widetilde J_\infty(x_0,u) \ge \sigma(\beta,K) \label{ineq: goal}\end{equation}
holds.
\end{lem}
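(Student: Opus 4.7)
The plan is to apply Lemma \ref{lem: ball} to locate a first ``bad'' index $M$ --- at which either the state has left $\BB_\eta(\xl)$ or the control has deviated by at least $\eta$ from $\ul$ --- and then to decompose
\[\widetilde J_\infty(x_0,u) = \sum_{k=0}^{M-1} \beta^k \tell(x(k),u(k)) + \beta^M \tell(x(M),u(M)) + \sum_{k=M+1}^\infty \beta^k \tell(x(k),u(k))\]
into three pieces, each estimated with the tool suited to it.

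By Lemma \ref{lem: ball} applied with horizon $P$, there is an $M \in \{0,\ldots,P-1\} \subseteq \{0,\ldots,K-1\}$ such that the trajectory lies in $\X_\NN$ through time $M$ and either $\|x(M)-\xl\| \geq \eta$ or $\|u(M)-\ul\| \geq \eta$ holds. For $k < M$ the local $(x,u)$-dissipativity gives $\tell(x(k),u(k)) \geq \alpha_\beta(\|x(k)-\xl\|+\|u(k)-\ul\|) \geq 0$, so the first sum is nonnegative. At $k = M$ the same inequality, together with the alternative from Lemma \ref{lem: ball}, yields $\tell(x(M),u(M)) \geq \alpha_\beta(\eta)$, which contributes at least $\beta^M \alpha_\beta(\eta)$.

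The tail is the delicate piece: once the trajectory exits $\X_\NN$ dissipativity no longer controls $\tell$, and $\tell$ may be negative. I would instead use the boundedness of $\lambda$ (say $|\lambda| \leq \Lambda$) together with $\ell \geq \ell_{\min}$ directly in the definition of $\tell$ to obtain the uniform lower bound $\tell(x,u) \geq C := \ell_{\min} - \ell\equbl - 2\Lambda$, valid on $\Y$ for every $\beta \in (0,1)$. Summing the geometric series then gives $\sum_{k \geq M+1} \beta^k \tell(x(k),u(k)) \geq \beta^{M+1} C/(1-\beta)$.

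Collecting the three pieces and using $M \leq K-1$ (so that $\beta^M \geq \beta^{K-1}$) yields
\[\widetilde J_\infty(x_0,u) \geq \beta^{K-1}\Bigl[\alpha_\beta(\eta) + \tfrac{\beta}{1-\beta} C\Bigr].\]
If $C \geq 0$ the right-hand side is already strictly positive for every $\beta \in (0,1)$. Otherwise the bracket is positive exactly when $\beta < \alpha_\beta(\eta)/(\alpha_\beta(\eta) + |C|)$, and this ratio supplies the threshold $\beta^\star$; the right-hand side then serves as $\sigma(\beta,K) > 0$. The main obstacle I anticipate is precisely this tail estimate: dissipativity gives no direct control on $\tell$ outside $\X_\NN$, and the argument closes only because for $\beta < \beta^\star$ the discounted weight $\beta/(1-\beta)$ multiplying the possibly negative tail is small enough that the positive witness contribution at step $M$ still dominates.
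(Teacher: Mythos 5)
Your proposal is correct and takes essentially the same route as the paper's own proof: apply Lemma \ref{lem: ball} at horizon $P$ to get the index $M$, use local $(x,u)$-dissipativity to make the terms before $M$ nonnegative and to extract the witness term $\beta^M\alpha_\beta(\eta)$, bound the tail via a uniform lower bound on $\tell$ coming from boundedness of $\lambda$ and of $\ell$ from below (your $C$ plays the role of the paper's $\tell_{\min}$), and choose $\beta^\star$ so that the positive term dominates the geometric tail. The only cosmetic differences are that the paper keeps a margin factor $\tfrac12$ to define $\sigma(\beta,K)=\beta^K\alpha_\beta(\eta)/(2(1-\beta))$ and hence gets $\beta^\star=\alpha_\beta(\eta)/(2(\alpha_\beta(\eta)-\tell_{\min}))$, while you take $\sigma(\beta,K)=\beta^{K-1}\bigl[\alpha_\beta(\eta)+\tfrac{\beta}{1-\beta}C\bigr]$; just note that replacing $\beta^M$ by $\beta^{K-1}$ is only legitimate once the bracket is known to be nonnegative, so that step should formally be invoked after restricting to $\beta<\beta^\star$, which your argument in substance does.
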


\begin{proof}
First observe that boundedness from below of $\ell$ and boundedness of $\lambda$ imply boundedness from below of $\tell$. Let $\tell_{\min} := \inf_{(x,u)\in\Y} \tell(x,u)$ with $\tell_{\min}<0$. Moreover, local dissipativity implies that $\tell(x,u)\ge 0$ for all $x\in\X_{\NN}$ and all $u\in\U$. 

    Since the trajectory under consideration satisfies the assumptions of Lemma \ref{lem: ball} with $K=P$, there exists $M\in\{0,\dots,P\}$ such that either assertion (i) or assertion (ii) of this lemma holds. In case (i), we obtain that 
    \[ \tell(x(M),u(M)) \ge \alpha_\beta(\|x(M)-\xl\|) \ge \alpha_\beta(\eta)  \]
    and in case (ii) we obtain 
    \[ \tell(x(M),u(M)) \ge \alpha_\beta(\|u(M)-\ul\|) \ge \alpha_\beta(\eta).  \]
    Hence, we get the same inequality in both cases and we abbreviate $\delta:=\alpha_\beta(\eta)>0$. 
    In addition, Lemma \ref{lem: ball} yields $x(0),\ldots,x(M)\in \mathcal{B}_\eta(\xl)\subset\X_{\NN}$, which implies $\tell(x(k),u(k))\ge 0$ for all $k=0,\ldots,M-1$, and the lower bound on $\tell$ implies $\tell(x(k),u(k))\ge\tell_{\min}$ for all $k\ge M+1$. Together this yields
	\begin{align*} \widetilde J_\infty(x_0,u)  = & \sum_{k=0}^\infty\beta^k\ell(x(k,x_0),u(k)\\
		=&\sum_{k=0}^{M-1}\beta^k\underbrace{\ell(x(k,x_0),u(k))}_{\ge 0}+\beta^M\underbrace{\ell(x(M,x_0),u(M))}_{\ge \delta}+\sum_{k=M+1}^\infty\beta^k\underbrace{\ell(x(k,x_0),u(k))}_{\ge \tell_{\min}}\\
		\geq& \beta^M\delta+\dfrac{\beta^{M+1}}{1-\beta}\tell_{\min} = \dfrac{\beta^M}{1-\beta}\left(\left(\tell_{\min}-\delta\right)\beta+\delta\right).
	\end{align*}
We now claim that the assertion holds for $\sigma = \frac{\beta^K\delta}{2(1-\beta)} \le \frac{\beta^M\delta}{2(1-\beta)}$. To this end, it is sufficient to show the existence of $\beta^\star$ with 
\[ \dfrac{\beta^M}{1-\beta}\left(\left(\tell_{\min}-\delta\right)\beta+\delta\right)\ge \frac{\beta^M\delta}{2(1-\beta)}\]
for all $\beta\in(0,\beta^\star)$. 
This is equivalent to 
\[ \dfrac{\beta^M}{1-\beta}\left(\left(\tell_{\min}-\delta\right)\beta+\frac{\delta}{2}\right)\ge 0\;\; \Leftrightarrow \;\; \left(\tell_{\min}-\delta\right)\beta+\frac{\delta}{2}\ge 0,\]
since $\tell_{\min}-\delta<0$. This inequality holds for all $\beta\in(0,\beta^\star)$ if $\beta^*=\delta/(2(\delta-\tell_{\min}))$.

\end{proof}

\begin{rem}
The choice of the fraction $\frac 1 2$ for $\sigma$ in the proof of Lemma \ref{lem: optimal} is arbitrary. We can also use a more general fraction $\frac{1}{k+1}$ with $k\in\N$. Then, with the same calculation as above we get that $\beta^* = \dfrac{k}{k+1}\dfrac{\delta}{\delta-\tell_{\min}}$.
\end{rem}

Based on the estimate from Lemma \ref{lem: optimal} we can now conclude that near-optimal solutions starting near $\xl$ stay in $\X_\mathcal{N}$ for a certain amount of time.

\begin{lem} Consider a discounted optimal control problem \eqref{dis OCP} subject to system \eqref{eq: nsys} with $f$ continuous and stage cost $\ell$ bounded from below. Assume local strict $(x,u)$-dissipativity at an equilibrium $\equbl$ according to Definition \ref{def:ldiss} with bounded storage function $\lambda$. Assume furthermore that there is $\gamma\in\KK_\infty$ and $\hat\beta\in(0,1]$ such that $|\widetilde V_\infty(x)| \le \gamma(\|x-\xl\|)$ for all $x\in\X_{\NN}$ and all $\beta\in(0,\hat\beta]$. Then there exists $\beta_2\in(0,1)$ with the following property: for any  $\beta\in(0,\beta_2)$ and any $K\in\N$ there exists a neighbourhood $\BB_{\eps(\beta,K)}(\xl)$ and a threshold value $\theta(\beta,K)>0$ such that all trajectories with $x_0\in \BB_{\eps(\beta,K)}(\xl)$ and $J_\infty(x_0,u) < V_\infty(x_0) + \theta(\beta,K)$ satisfy $x(k)\in \X_{\NN}$ for all $k\in\{0,\ldots,K\}$.
\label{lem:stay}\end{lem}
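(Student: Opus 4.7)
The plan is to argue by contradiction, exploiting Lemma \ref{lem: optimal} (lower bound on $\widetilde J_\infty$ if the trajectory leaves the neighbourhood) against the hypothesis of near-optimality together with the upper bound $|\widetilde V_\infty|\le \gamma(\|\cdot-\xl\|)$.

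First I would set $\beta_2 := \min\{\beta^\star,\hat\beta\}$, where $\beta^\star\in(0,1)$ is the threshold provided by Lemma \ref{lem: optimal}. Fix $\beta\in(0,\beta_2)$ and $K\in\N$, and let $\rho>0$, $\eta\in(0,\rho]$ be the radii from Lemmas \ref{lem: ball} and \ref{lem: optimal}, and $\sigma(\beta,K)>0$ the lower bound from Lemma \ref{lem: optimal}. Since $\gamma\in\KK_\infty$, I can choose $\eps(\beta,K)\in(0,\eta]$ so small that $\gamma(\eps(\beta,K))<\sigma(\beta,K)/2$, and set $\theta(\beta,K):=\sigma(\beta,K)/2$. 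Note that $\BB_{\eps(\beta,K)}(\xl)\subset\BB_\eta(\xl)\subset\BB_\rho(\xl)\subset\X_\NN$, so in particular the initial state is already in $\X_\NN$.

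Now take any $x_0\in\BB_{\eps(\beta,K)}(\xl)$ and $u\in\U^\infty(x_0)$ with $J_\infty(x_0,u)<V_\infty(x_0)+\theta(\beta,K)$. Suppose, for contradiction, that $x_u(k_0,x_0)\notin\X_\NN$ for some $k_0\in\{0,\ldots,K\}$. Since $x_0\in\X_\NN$ we have $k_0\ge 1$, and since $\BB_\rho(\xl)\subset\X_\NN$ we obtain $x_u(k_0,x_0)\notin\BB_\rho(\xl)$. Hence the trajectory satisfies the hypotheses of Lemma \ref{lem: optimal} with $P=k_0\le K$, which yields
\begin{equation*}
\widetilde J_\infty(x_0,u)\ \ge\ \sigma(\beta,K).
\end{equation*}
On the other hand, by Lemma \ref{prop: traj} the functionals $J_\infty$ and $\widetilde J_\infty$ differ by an expression independent of $u$, so $J_\infty(x_0,u)<V_\infty(x_0)+\theta(\beta,K)$ is equivalent to $\widetilde J_\infty(x_0,u)<\widetilde V_\infty(x_0)+\theta(\beta,K)$. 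Combining this with $\beta\le\hat\beta$ and the hypothesised bound on $\widetilde V_\infty$ on $\X_\NN$ gives
\begin{equation*}
\widetilde J_\infty(x_0,u)\ <\ \gamma(\|x_0-\xl\|)+\theta(\beta,K)\ <\ \gamma(\eps(\beta,K))+\tfrac{\sigma(\beta,K)}{2}\ <\ \sigma(\beta,K),
\end{equation*}
contradicting the lower bound above. Hence $x_u(k,x_0)\in\X_\NN$ for all $k\in\{0,\ldots,K\}$, as required.

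The only delicate point in this plan is checking that the hypotheses of Lemma \ref{lem: optimal} are genuinely available, i.e.\ that $x_0\in\BB_\eta(\xl)$ (ensured by $\eps(\beta,K)\le\eta$) and that "leaving $\X_\NN$" implies "leaving $\BB_\rho(\xl)$" (ensured by $\BB_\rho(\xl)\subset\X_\NN$). Everything else is a routine bookkeeping of the constants: $\sigma(\beta,K)$ is obtained independently of $\eps$ and $\theta$ from Lemma \ref{lem: optimal}, and then $\eps(\beta,K)$ and $\theta(\beta,K)$ are shrunk relative to $\sigma(\beta,K)$. I do not anticipate any genuine obstacle, since the only way $\beta$ enters the argument beyond the qualitative threshold $\beta^\star$ is through the constant $\sigma(\beta,K)$, which is fine because $\eps$ and $\theta$ are allowed to depend on $\beta$ and $K$.
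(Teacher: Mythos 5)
Your proof is correct and takes essentially the same route as the paper's: set $\beta_2=\min\{\beta^\star,\hat\beta\}$, $\theta(\beta,K)=\sigma(\beta,K)/2$, choose $\eps(\beta,K)$ so that $\gamma(\eps(\beta,K))\le\sigma(\beta,K)/2$, and play the lower bound of Lemma \ref{lem: optimal} for trajectories leaving $\BB_\rho(\xl)\subset\X_{\NN}$ against the near-optimality bound $\widetilde J_\infty(x_0,u)<\widetilde V_\infty(x_0)+\theta(\beta,K)<\gamma(\eps(\beta,K))+\theta(\beta,K)$. Your explicit requirement $\eps(\beta,K)\le\eta$, which guarantees that Lemma \ref{lem: optimal} is indeed applicable to the initial condition, is a small point the paper leaves implicit; otherwise the arguments coincide.
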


\begin{proof} We choose $\beta_2$ as the minimum of $\beta^\star$ from Lemma \ref{lem: optimal} and $\hat\beta$. We further use $\sigma(\beta,K)>0$ from Lemma \ref{lem: optimal} to set $\eps(\beta,K):= \gamma^{-1}(\sigma(\beta,K)/2)$ and $\theta(\beta,K):= \sigma(\beta,K)/2$. Now consider a trajectory meeting the assumptions and observe that since $J_\infty$ and $\widetilde J_\infty$ differ only by a term that is independent of $u(\cdot)$, the assumption $J_\infty(x_0,u) \le V_\infty(x_0) + \theta(\beta,K)$ together with the assumption on $x_0$ implies 
\[ \widetilde J_\infty(x_0,u) < \widetilde V_\infty(x_0) + \theta(\beta,K) < \gamma(\eps(\beta,K)) +  \theta(\beta,K).\]
The definition of $\theta$ and $\eps$ then implies
\[\widetilde J_\infty(x_0,u) < \sigma(\beta,K)/2 + \sigma(\beta,K)/2 = \sigma(\beta,K). \]
Since by Lemma \ref{lem: optimal} any trajectory leaving $\X_{\NN}$ (and thus also $\BB_\rho(\xl)$) up to time $K$ has a rotated value satisfying
\[\widetilde J_\infty(x_0,u) \ge \sigma(\beta,K), \]
the trajectory under consideration cannot leave $\X_{\NN}$ for $k\in\{0,\ldots,K\}$.
\end{proof}

\section{The local discounted turnpike property without assuming invariance}
\label{sec:main}

With the preparations from the previous sections, we are now able to formulate our main theorem on the existence of a local turnpike property.

\begin{thm}\label{th: locturnpike}
Consider a discounted optimal control problem \eqref{dis OCP} subject to system \eqref{eq: nsys} with $f$ continuous and stage cost $\ell$ bounded from below. Assume local strict $(x,u)$-dissipativity at an equilibrium $\equbl$ according to Definition \ref{def:ldiss} with bounded storage function $\lambda$ on $\X_{\NN}$. Assume furthermore that there is $\gamma\in\KK_\infty$ and $\hat\beta\in(0,1]$ such that $|\widetilde V_\infty(x)| \le \gamma(\|x-\xl\|)$ for all $x\in\X_{\NN}$ and all $\beta\in(0,\hat\beta)$, and that there is an interval $[\beta_1,\beta^*]$ of discount rates with $\beta_1<\hat\beta$, such that for each $\beta\in(\beta_1,\beta^*)$ the assumptions of Theorem \ref{th: disturninf} hold for all $x \in \X_{\NN}$.

Then there is $\beta_2\in(0,1)$ such that for all $\beta\in (\beta_1,\beta_2)$ there exists a neighbourhood $\NN$ of $\xl$ on which the system exhibits a local turnpike property in the following sense: 

For each $\eps>0$ there exist a constant $P>0$ such that for each $M\in\N$ there is a $\delta>0$, such that for all $x_0\in \NN$ and all $u\in\U^\infty(x_0)$ with $J_\infty(x_0,u)\leq V_\infty(x_0)+\delta$, the set $\mathcal{Q}(x,u,\eps,M,\beta):=\{k\in\{0,\dots, M\}\mid\|x_u(k,x_0)-x^\beta\|\geq \eps\}$ has at most $P$ elements. 

Particularly, if $J_\infty(x_0,u) = V_\infty(x_0)$, i.e., if the trajectory is optimal, then for each $\eps>0$ the set $\mathcal{Q}(x,u,\eps,\infty,\beta):=\bigcup_{M\in\N}\mathcal{Q}(x,u,\eps,M,\beta)$ has at most $P$ elements, implying the convergence $x_u(k,x_0)\to x^\beta$ as $k\to\infty$.
\end{thm}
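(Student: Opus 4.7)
The strategy is to reduce Theorem \ref{th: locturnpike} to Theorem \ref{thm: localturn} by constructing a forward-invariant set $\X_{inv}\subset\X_{\NN}$ for optimal trajectories, and by using Lemma \ref{lem:stay} to control general near-optimal trajectories so that they satisfy the window-invariance condition $x_u(k,x_0)\in\X_{inv}$ for $k\in\{0,\ldots,M\}$ required by that theorem. First I would set $\beta_2$ as the minimum of the threshold produced by Lemma \ref{lem:stay} and $\beta^*$ from the hypothesis, so that for every $\beta\in(\beta_1,\beta_2)$ both the assumptions of Theorem \ref{th: disturninf} on $\X_{\NN}$ and the conclusion of Lemma \ref{lem:stay} are available.

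The invariant set $\X_{inv}$ would be built as a small sublevel set of $\widetilde V_\infty$ intersected with a ball $\BB_r(\xl)\subset\X_{\NN}$, following the strategy outlined in the remark after Theorem \ref{thm: localturn}. The key Lyapunov-type estimate is \eqref{eq:case2} from the proof of Theorem \ref{th: disturninf}: with $\delta=0$ it reads $\widetilde V_\infty(x^*(k+1))\le q\,\widetilde V_\infty(x^*(k))$ for some $q\in[0,1)$ whenever $x^*(k)\in\X_{\NN}$. Anchoring an induction with Lemma \ref{lem:stay} for $K=1$ to guarantee that the first step of an optimal trajectory from $\X_{inv}$ lies in $\X_{\NN}$, and then propagating with this geometric decrease, yields a sublevel set that is forward invariant under optimal control. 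Lemma \ref{lem:lbound} applied to $\X_{inv}$ then provides the lower bound \eqref{eq:lbound} required by Theorem \ref{thm: localturn}.

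For a general near-optimal trajectory with $J_\infty(x_0,u)\le V_\infty(x_0)+\delta$, iterating \eqref{eq:case2} yields, as long as the trajectory stays in $\X_{\NN}$,
\[ \widetilde V_\infty(x_u(k,x_0))\le\widetilde V_\infty(x_0)+\frac{\delta}{\beta^M(1-q\beta)}\qquad\text{for all }k\in\{0,\ldots,M\}. \]
Choosing $\NN\subset\X_{inv}$ as a strictly smaller sublevel set and selecting $\delta=\delta(\beta,\eps,M)$ small enough to keep the right-hand side below the level defining $\X_{inv}$ gives $x_u(k,x_0)\in\X_{inv}$ for all $k\in\{0,\ldots,M\}$ by straightforward induction. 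Theorem \ref{thm: localturn} then yields $|\QQ(x_0,u,\eps,M,\beta)|\le P$ with $P=P(\eps)$ independent of $M$. For the ``particularly'' part, an optimal trajectory satisfies $J_\infty=V_\infty$, so any $\delta>0$ is admissible for every $M$; since $P$ is independent of $M$, the union $\bigcup_{M\in\N}\QQ(x_0,u,\eps,M,\beta)$ has at most $P$ elements, which forces $x_u(k,x_0)\to x^\beta$ as $k\to\infty$.

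The hard part is the construction of $\X_{inv}$: Lemma \ref{lem:stay} only yields stay-near neighbourhoods $\BB_{\eps(\beta,K)}(\xl)$ that shrink to $\{\xl\}$ as $K\to\infty$, so naive iteration does not produce all-time invariance. The workaround is to use one application of Lemma \ref{lem:stay} to enter $\X_{\NN}$ after one step and to let the Lyapunov decrease of $\widetilde V_\infty$ under \eqref{eq:Cdiss}--\eqref{eq:Cbetadiss} do the rest. The chicken-and-egg between needing \eqref{eq:lbound} to ensure the sublevel set sits inside $\X_{\NN}$ and needing invariance to apply Lemma \ref{lem:lbound} must be broken by shrinking the sublevel threshold and the radius of $\BB_r(\xl)$ simultaneously.
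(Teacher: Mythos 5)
Your overall route is the same as the paper's: take $\beta_2$ from Lemma \ref{lem:stay}, build the neighbourhood as a sublevel set of $\widetilde V_\infty$ inside the $K=1$ neighbourhood $\BB_{\eps(\beta,1)}(\xl)$, show by induction that near-optimal trajectories remain in this set for $k\in\{0,\ldots,M\}$ using the Lyapunov estimate \eqref{eq:case2} with $\delta$ scaled by $\beta^M$, and then invoke Theorem \ref{thm: localturn} with $\X_{inv}$ equal to the constructed set; the convergence statement for optimal trajectories via $P$ independent of $M$ is also as in the paper. Your cumulative bound $\widetilde V_\infty(x_u(k,x_0))\le\widetilde V_\infty(x_0)+\delta/(\beta^M(1-q\beta))$, with $q=(1-1/C)/\beta\in[0,1)$, is a legitimate substitute for the paper's two-case practical-Lyapunov argument (strict decrease above the level $\lambda/2$, increase bounded by $\lambda/2$ below it), and the separate set $\X_{inv}$ versus a smaller $\NN$ is an inessential repackaging of the paper's single sublevel set.

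There is, however, a genuine gap in how you close the induction, stated explicitly in your last paragraph: ``use one application of Lemma \ref{lem:stay} to enter $\X_{\NN}$ after one step and to let the Lyapunov decrease of $\widetilde V_\infty$ do the rest.'' The Lyapunov decrease cannot do the rest. The estimate \eqref{eq:case2} at step $k$ controls only the \emph{value} $\widetilde V_\infty(x(k+1))$, and $\widetilde V_\infty$ has no lower bound, indeed no sign, outside $\X_{\NN}$ --- Lemma \ref{lem:lbound} yields \eqref{eq:lbound} only on an invariant subset of $\X_{\NN}$. Hence a small value of $\widetilde V_\infty(x(k+1))$ does not localize $x(k+1)$: the trajectory could leave $\X_{\NN}$ in a single step towards a point with small or even negative modified value, your induction could not detect this, and you could neither conclude $x(k+1)\in\X_{inv}$ nor invoke \eqref{eq:Cdiss} at the next step. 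The paper closes this loop by applying Lemma \ref{lem:stay} with $K=1$ at \emph{every} induction step, to the shifted trajectory starting at $x(k)$ with control $u(\cdot+k)$; this certifies $x(k+1)\in\X_{\NN}$, and only then does the value bound place $x(k+1)$ in the sublevel set. This per-step application in turn requires that every tail be $\theta(\beta,1)$-near-optimal, i.e.\ $\widetilde J_\infty(x(k),u(\cdot+k))\le\widetilde V_\infty(x(k))+\theta(\beta,1)$ for all $k\le M$, which is exactly why the paper's $\delta$ contains the term $\beta^M\theta(\beta,1)$ in addition to the sublevel-related terms --- a constraint absent from your choice of $\delta$, which you tie only to the level defining $\X_{inv}$. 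With these two amendments (per-step use of Lemma \ref{lem:stay} on the tails, and $\delta\le\beta^M\theta(\beta,1)$), your argument coincides with the paper's proof.
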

\begin{proof} The idea of the proof is to use $\beta_2$ from Lemma \ref{lem:stay} and, for each $\beta\in(\beta_1,\beta_2)$, to construct a neighbourhood $\NN$ of $\xl$ and a $\delta>0$ such that all trajectories starting in $x_0\in\NN$ and satisfying $J_\infty(x_0,u)\leq V_\infty(x_0)+\delta$ stay in $\NN$ for all future times. Then the turnpike property follows from Theorem \ref{thm: localturn} applied with $\X_{inv}=\NN$.

To this end, we take $\beta_2$ from Lemma \ref{lem:stay}, fix $\beta\in(\beta_1,\beta_2)$, and consider the neighbourhood $\BB_{\eps(\beta,1)}(\xl)$ and the threshold value $\theta(\beta,1)$ from Lemma \ref{lem:stay} for $K=1$. We choose $\NN$ as the largest sublevel set of $\widetilde V_\infty$ that is contained in $\BB_{\eps(\beta,1)}(\xl)$ and denote the level by $\lambda>0$, i.e., $\NN=\{x\in\X_{\NN}\,|\, \widetilde V_\infty(x) < \lambda\}$. We abbreviate $\kappa = (1-\beta)-1/C$, observing that $\kappa<0$ because of because of \eqref{eq:Cdiss} (cf.\ also the proof of Theorem \ref{th: disturninf}), and set
\[ \delta:= \beta^M\min\left\{ \theta(\beta,1), -\frac{\kappa\lambda}{2\beta},\frac \lambda 2\right\}. \]
Now let $x_0$ and $u$ be as in the assertion, i.e., satisfying $J_\infty(x_0,u)\leq V_\infty(x_0)+\delta$, and denote the corresponding trajectory by $x(\cdot)$. Then, just as in the first part of the proof of Theorem \ref{th: disturninf}, we obtain the estimate
\[ \widetilde J_\infty(x(k),u(\cdot+k))\leq \widetilde V_\infty(x(k))+\frac{\delta}{\beta^k} \]
for all $k\in\N$. By definition of $\delta$ this in particular implies
\begin{equation} \widetilde J_\infty(x(k),u(\cdot+k))\leq \widetilde V_\infty(x(k))+ \theta(\beta,1) 
\label{eq:Jbound} \end{equation}
for all $k=0,\ldots,M$. 

Now we prove by induction that $x(k)\in \NN$ for all $k=0,\ldots,M$. For $k=0$ this follows from the choice of $x_0$. For $k\to k+1$, we make the induction assumption that $x(k)\in \NN$, i.e., $\widetilde V_\infty(x(k))<\lambda$. Then, because of \eqref{eq:Jbound} and $\NN\subseteq \BB_{\eps(\beta,1)}(\xl)$, Lemma \ref{lem:stay} (applied with initial value $x_0=x(k)$ and control $u(\cdot+k)$) implies that $x(k+1)\in \X_{\NN}$. Hence, all the (in)equalities leading to inequality \eqref{eq:case2} in the proof of  Theorem \ref{th: disturninf} are valid and, together with the definition of $\delta$, yield
\[ \widetilde V_\infty(x(k+1)) - \widetilde V_\infty(x(k)) \le \frac{\kappa}{\beta} \widetilde V_\infty(x(k)) + \frac{\delta}{\beta^k} \le \frac{\kappa}{\beta} \widetilde V_\infty(x(k)) + \min\left\{\theta(\beta,1),  -\frac{\kappa\lambda}{2\beta},\; \frac \lambda 2\right\}. \]

Now if $\widetilde V_\infty(x(k)) \ge \lambda/2$, then second term in the minimum defining $\delta$ implies 
\[ \widetilde V_\infty(x(k+1)) - \widetilde V_\infty(x(k)) \le  \frac{\kappa}{\beta} \frac \lambda 2 -\frac{\kappa\lambda}{2\beta}=0,\]
implying $\widetilde V_\infty(x(k+1)) \le \widetilde V_\infty(x(k)) < \lambda$ and thus $x(k+1)\in \NN$. 

If $\widetilde V_\infty(x(k)) < \lambda/2$, then the third term in the minimum defining $\delta$ implies 
\[ \widetilde V_\infty(x(k+1)) - \widetilde V_\infty(x(k)) \le  \underbrace{\frac{\kappa}{\beta} \widetilde V_\infty(x(k)}_{\le 0} + \frac \lambda 2\le \frac  \lambda 2,\]
implying $\widetilde V_\infty(x(k+1)) \le \widetilde V_\infty(x(k)) + \frac{\lambda}{2} < \lambda$, i.e., again $x(k+1)\in \NN$. This proves the induction step and hence $x(k)\in \NN$ for all $k=0,\ldots,M$. 

Now the turnpike property follows from Theorem \ref{thm: localturn} applied with $\X_{inv}=\NN$.\end{proof}

\begin{rem}
We note that the interval $(\beta_1,\beta_2)$ may be empty. This is because 
\begin{enumerate}
\item[(i)] the condition \eqref{eq:Cdiss} needed for proving the turnpike property for trajectories staying near $\xl$ may require sufficiently large $\beta$ to hold
\item[(ii)] a trajectory starting near $\xl$ will in general only stay near $\xl$ for sufficiently small $\beta$
\end{enumerate}
More precisely, the lower bound in (ii) as identified at the end of the proof of Lemma \ref{lem: optimal} depends on the cost $\tell$ outside a neighbourhood of $\xl$ and the cost to leave this neighbourhood. The upper bound in (i), in turn, depends on the cost to reach the equilibrium $\xl$ from a neighbourhood. If this cost is high and, in addition, the cost to leave the neighbourhood and the cost outside the neighbourhood are low, then the set of discount rates for which a local turnpike behaviour occurs may be empty. 
\end{rem}

\begin{rem}
The attentive reader may have noted that we apply Lemma \ref{lem:stay} with $K=1$ in this proof, rather than with $K=M$, which might appear more natural given that we want to make a statement for $\{0,\ldots,M\}$. This is because the size of the neighbourhood $\BB_{\eps(\beta,K)}(\xl)$ delivered by Lemma \ref{lem:stay} depends on $K$. Hence, if we applied Lemma \ref{lem:stay} with $K=M$ in order to construct the neighbourhood $\NN$, this neighbourhood may shrink down to $\{\xl\}$ as $M$ increases. In contrast to this, the fact that $\widetilde V_\infty$ is a (practical) Lyapunov function allows us to construct a neighbourhood $\NN$ that does not depend on $M$. 
\end{rem}

\section{Examples}\label{sec:ex}

We end our paper with a couple of examples illustrating our theoretical results.  All numerical solutions were obtained using a dynamic programming algorithm as described in \cite{GruS04}. We start with two examples exhibiting a locally and a globally optimal equilibrium.

\begin{bsp}\label{ex:1}
Consider the dynamics $f(x,u) = x+ u$ and the stage cost $\ell(x,u)=x^4-\frac 1 4 x^3 - \frac 7 4 x^2$. 
	\begin{figure}[htb]
	\begin{center}
		\includegraphics[width= 0.3\textwidth]{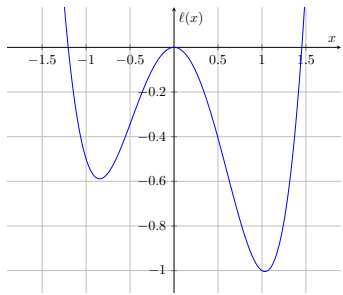}
		\caption{Stage cost $\ell(x)$}\label{fig: ex sc}
	\end{center} 
\end{figure}
As visualized in Figure \ref{fig: ex sc}, the stage cost $\ell$ has a local minimum in $x =  \frac{3-\sqrt{905}}{32}$, a maximum in $x=0$ and a global minimum in $x = \frac{3+\sqrt{905}}{32}$. Following \cite[Section 4]{GMKW20} we can calculate the storage function $\lambda$ by using the optimality conditions for optimal equilibria. We remark that the procedure for computing global storage functions described in this reference also works for the local dissipativity in case of local convexity which is given in this example, cf.\ also the discussion after Example \ref{ex:2}, below. Thus, by a straightforward calculation, we get the local equilibrium $\equbl =  (\frac{3-\sqrt{905}}{32},0)$ and the storage function $\lambda\equiv 0$. Inserting this, we get the rotated stage cost $\tell(x,u) = x^4-\frac 1 4 x^3 - \frac 7 4 x^2 - \ell(\xl, 0)$ and local discounted strict $(x,u)$-dissipativity of the system $f(x,u) = x+ u$ at $\xl$ for any $\beta\in(0,1)$. Thus, the assumptions of Lemma \ref{lem: ball} and Lemma \ref{lem: optimal} are fulfilled. Hence, following the proof of Lemma \ref{lem: optimal} we can estimate $\beta_2\approx 0.67$ with $\delta \approx 1$ and $\tell_{\min}\approx -0.42$. Further, since $\norm{\tell(x,u)}$ is bounded for $x$ in a neighbourhood $\BB_\eps(x_0)$, $\eps>0$, Theorem \ref{th: locturnpike} can be applied. For illustrating the theoretical results, we set $\U=[-0.75, 0.75]$.

\begin{figure}[htb]
	\begin{center}
	\begin{minipage}[t]{0.47\textwidth}
		\includegraphics[width= \textwidth]{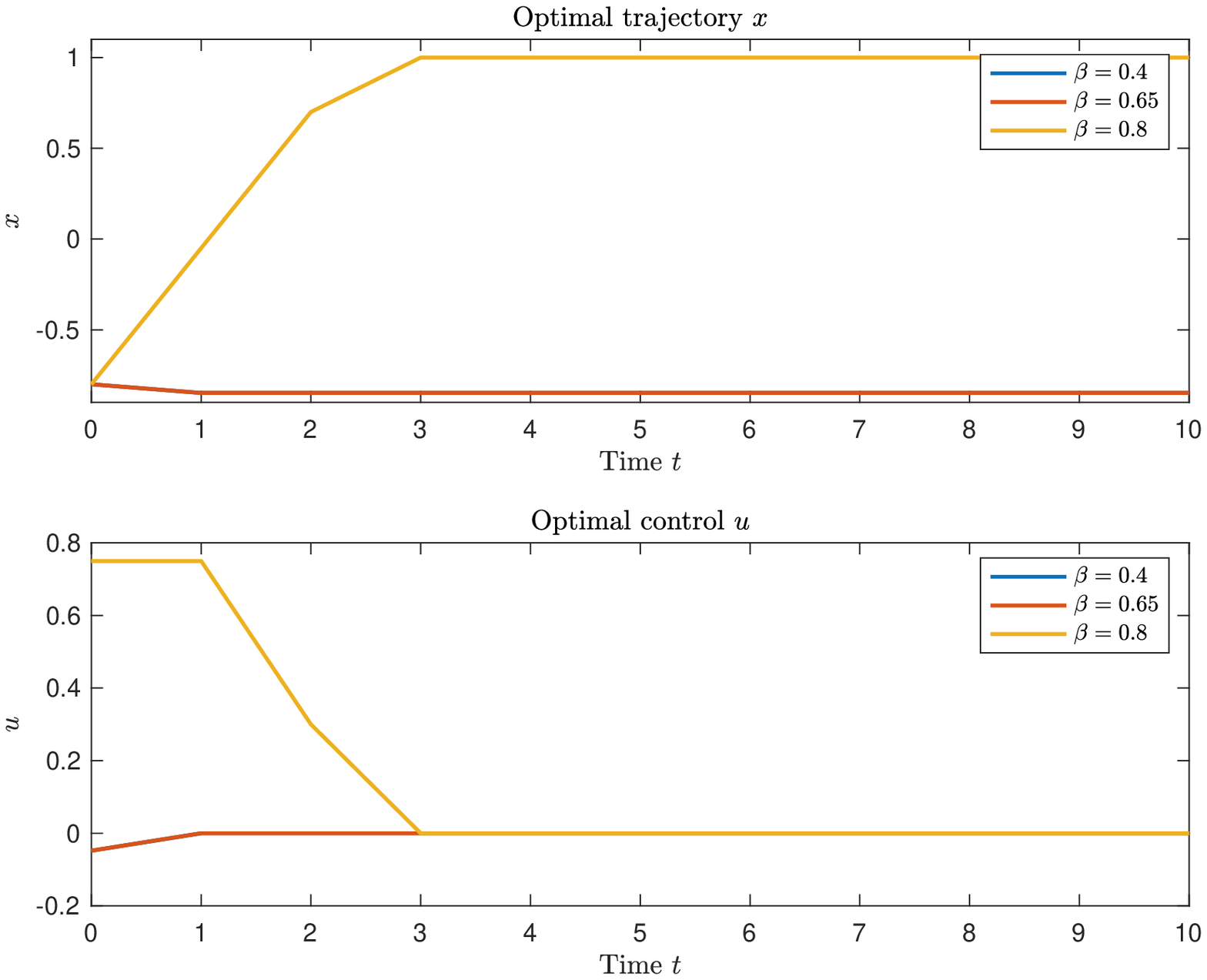}
	\end{minipage}
	\begin{minipage}[t]{0.47\textwidth}
	\includegraphics[width = \textwidth]{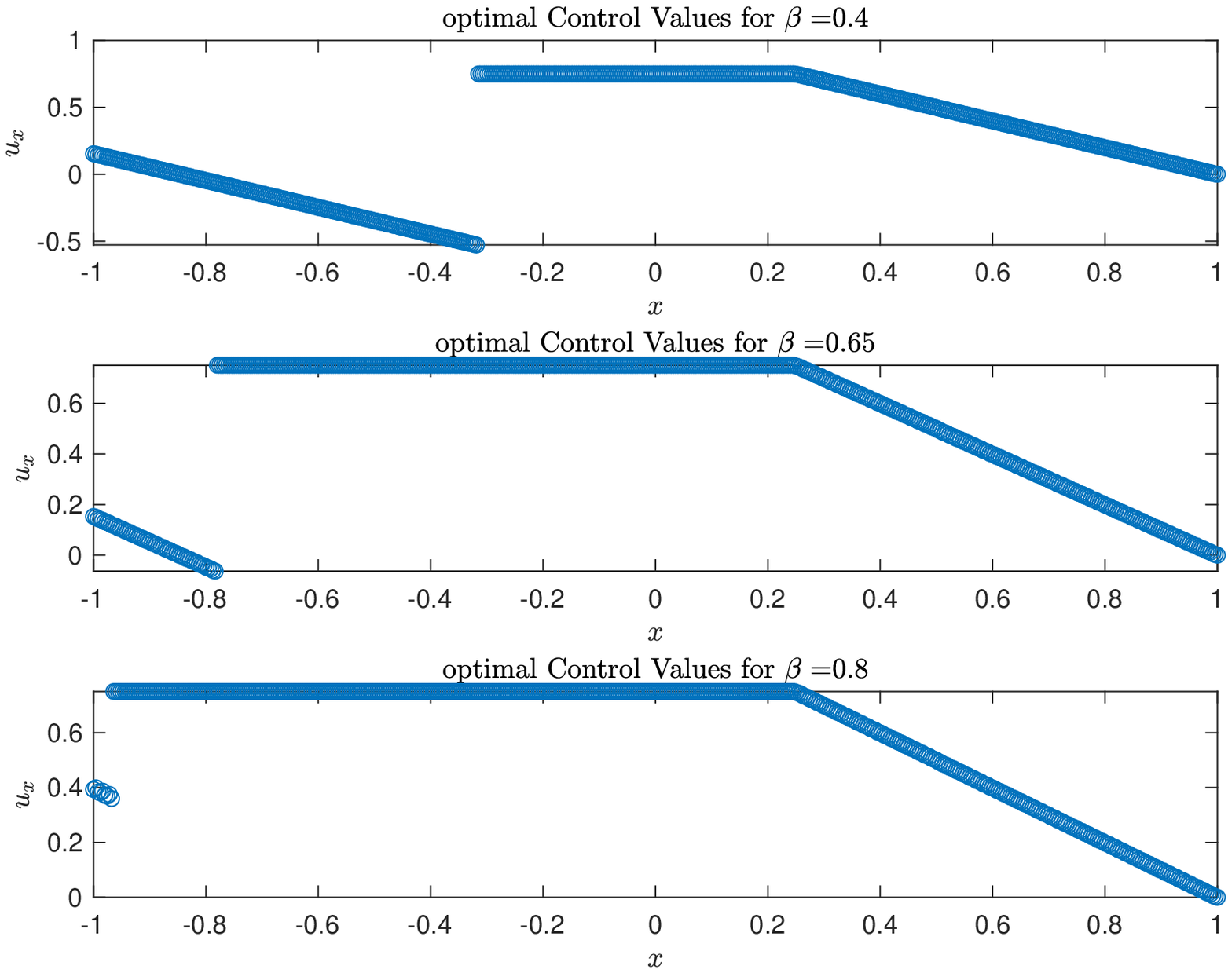}
\end{minipage}
		\caption{Example \ref{ex:1} with $x_0 = -0.8$}\label{fig: ex1 beta}
	\end{center} 
\end{figure}

On the left hand side of Figure \ref{fig: ex1 beta} we show the behaviour of the trajectory $x$ and the control $u$ for different discount factors $\beta$. On the right hand side, we can observe the optimal feedback control values $u_x$ and therefore the domain of attraction of the equilibria dependent on $\beta$. After a maximum of three time instants, the trajectory reaches the global equilibrium for $\beta$ large enough. In contrast, for $\beta\leq 0.67$ we can observe that it is more favourable to stay in a neighbourhood of the local equilibrium $\xl$. We remark that is sufficient to depict $\beta = 0.8$ as a representative for all $\beta \in (0.67, 1)$ since the behaviour of the trajectory, the control and the stage cost does not change significantly.

\begin{figure}[htb]
	 \begin{center}
	\begin{minipage}[t]{0.47\textwidth}
		\includegraphics[width=\textwidth]{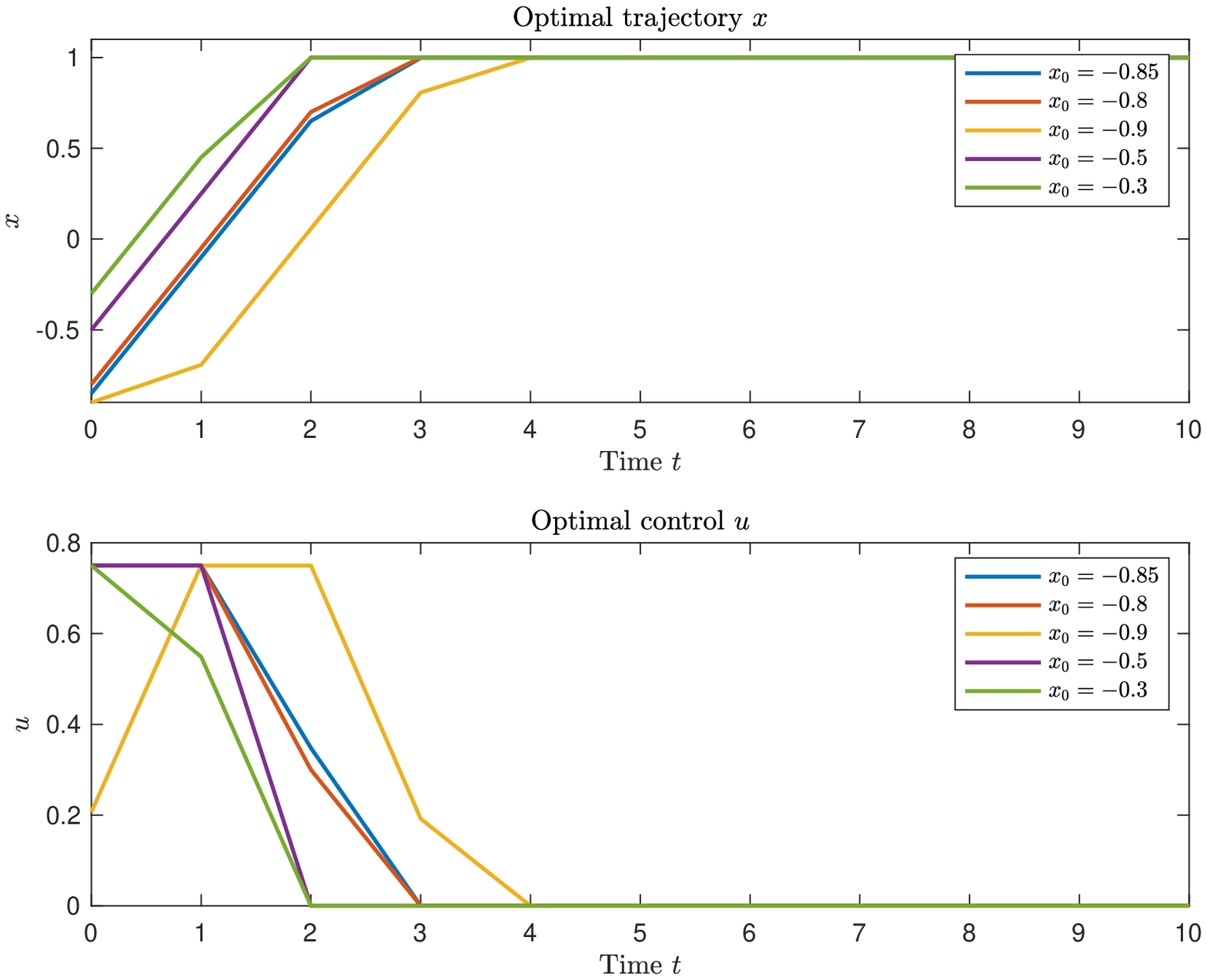}
	\end{minipage}
	\begin{minipage}[t]{0.47\textwidth}
	\includegraphics[width=\textwidth]{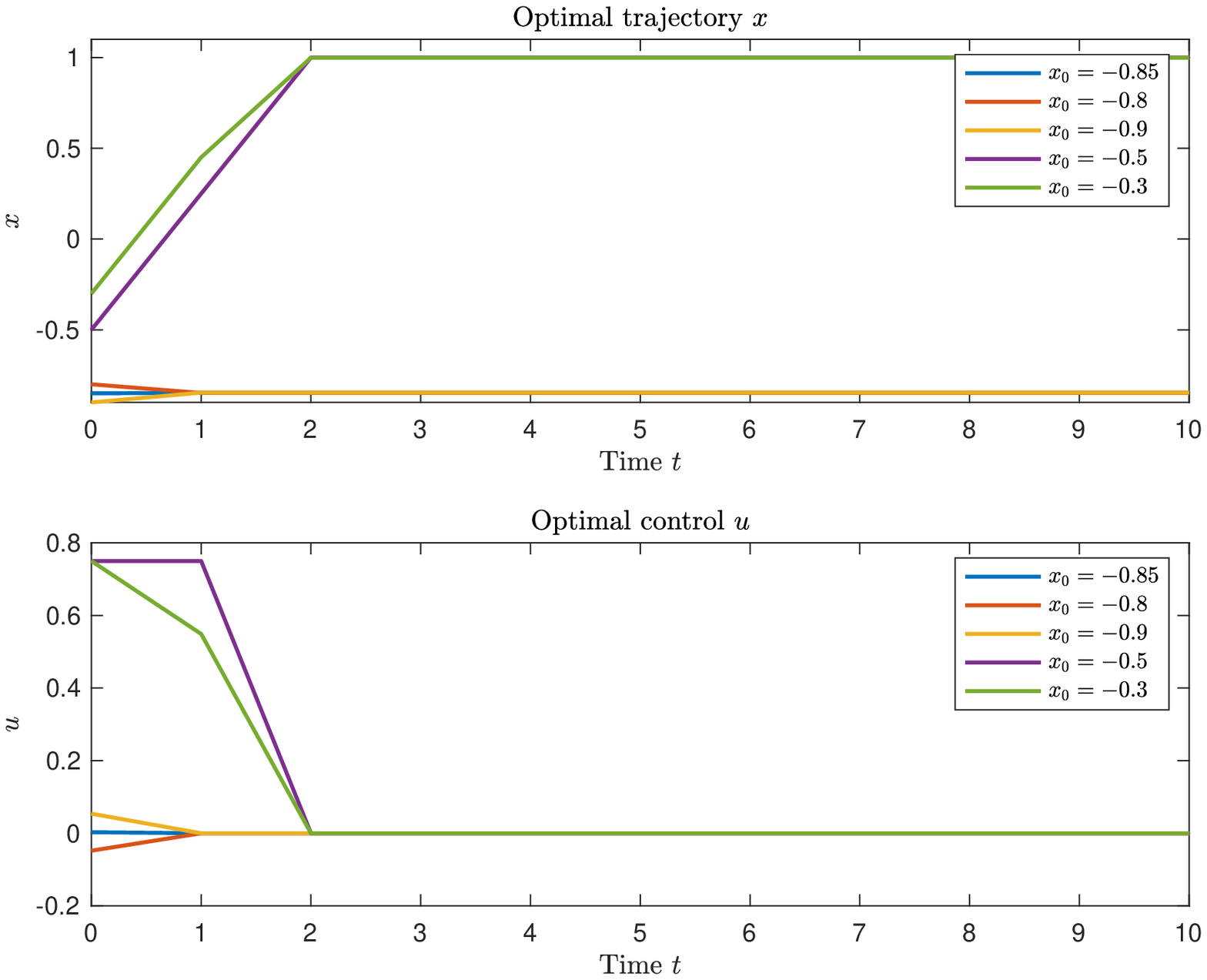}
	\end{minipage}
	\caption{Example \ref{ex:1} with $\beta = 0.7$ (left) and $\beta = 0.6$ (right) for different start values $x_0$}\label{fig: ex1 x0}
	\end{center}
\end{figure}

Figure \ref{fig: ex1 x0}, for fixed $\beta = 0.7$ we consider different initial values $x_0$. As we can see, the initial value determines to which equilibrium the trajectory converges. This underpins the theoretical results of Theorem \ref{th: locturnpike} and especially of Lemma \ref{lem: optimal}. We note that for a completely controllable system such a behaviour cannot occur in undiscounted problems.
\end{bsp}

The following modified example illustrates the case that the interval $(\beta_1, \beta_2)$ is empty.

\begin{bsp}\label{ex:2}
	Consider again the system $f(x,u)=x+u$, now with stage cost $\ell(x,u)=x^4-\frac 1 4 x^3 - \frac 7 4 x^2+\gamma |u|$ with $\gamma\neq 0$. As the added term has no influence on the conditions of Theorem \ref{th: locturnpike} we can again estimate $\beta_2\approx 0.67$. Further, for $\gamma=0$ we get the same stage cost as in Example \ref{ex:1} above. In contrast to Example \ref{ex:1}, now for $\gamma$ large enough we can observe that $(\beta_1, \beta_2)$ is empty. This fact is illustrated in Figure \ref{fig: ex2 beta} for $\gamma = 10$. For the numerical results we use the same setting as in example \ref{ex:1}.
	
	\begin{figure}[htb]
		\begin{center}
			\includegraphics[width=0.7\textwidth]{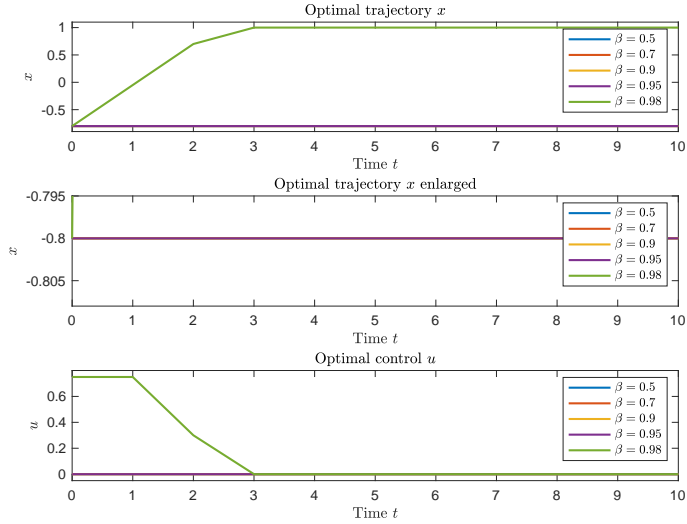}
			\caption{Example \ref{ex:2} with $\gamma = 10$ for different discount factor $\beta$}\label{fig: ex2 beta}
		\end{center}
	\end{figure}
	
In contrast, in the graph with $\gamma = 10$ we can clearly observe that independent of the discount factor $\beta$ we do not get convergence to the local equilibrium any more For $\beta$ large enough we even get convergence to the global equilibrium.
	\begin{figure}[htb]
		\begin{center}
		\begin{minipage}[t]{0.47\textwidth}
		\includegraphics[width=\textwidth]{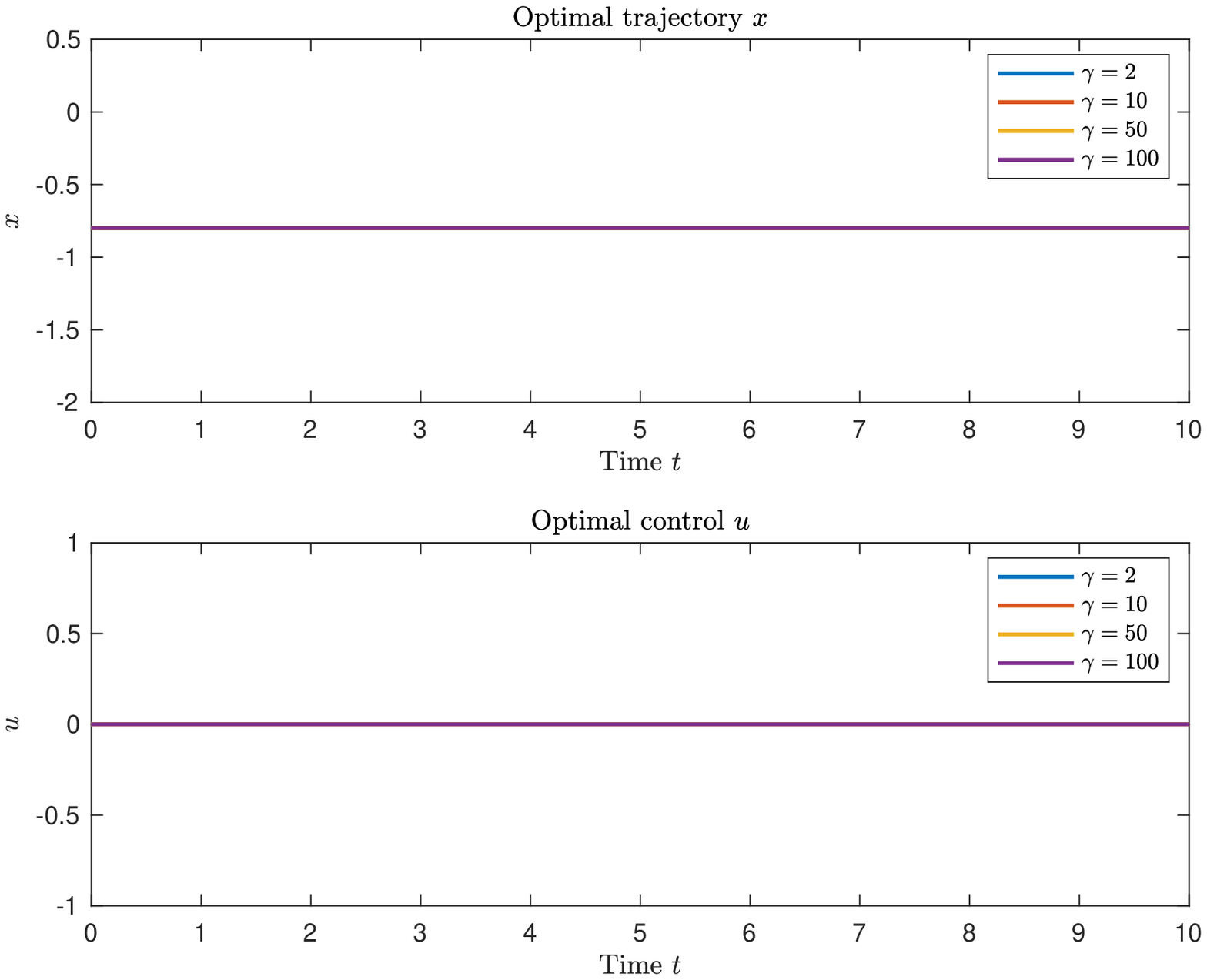}
	\end{minipage}
	\begin{minipage}[t]{0.47\textwidth}
		\includegraphics[width=\textwidth]{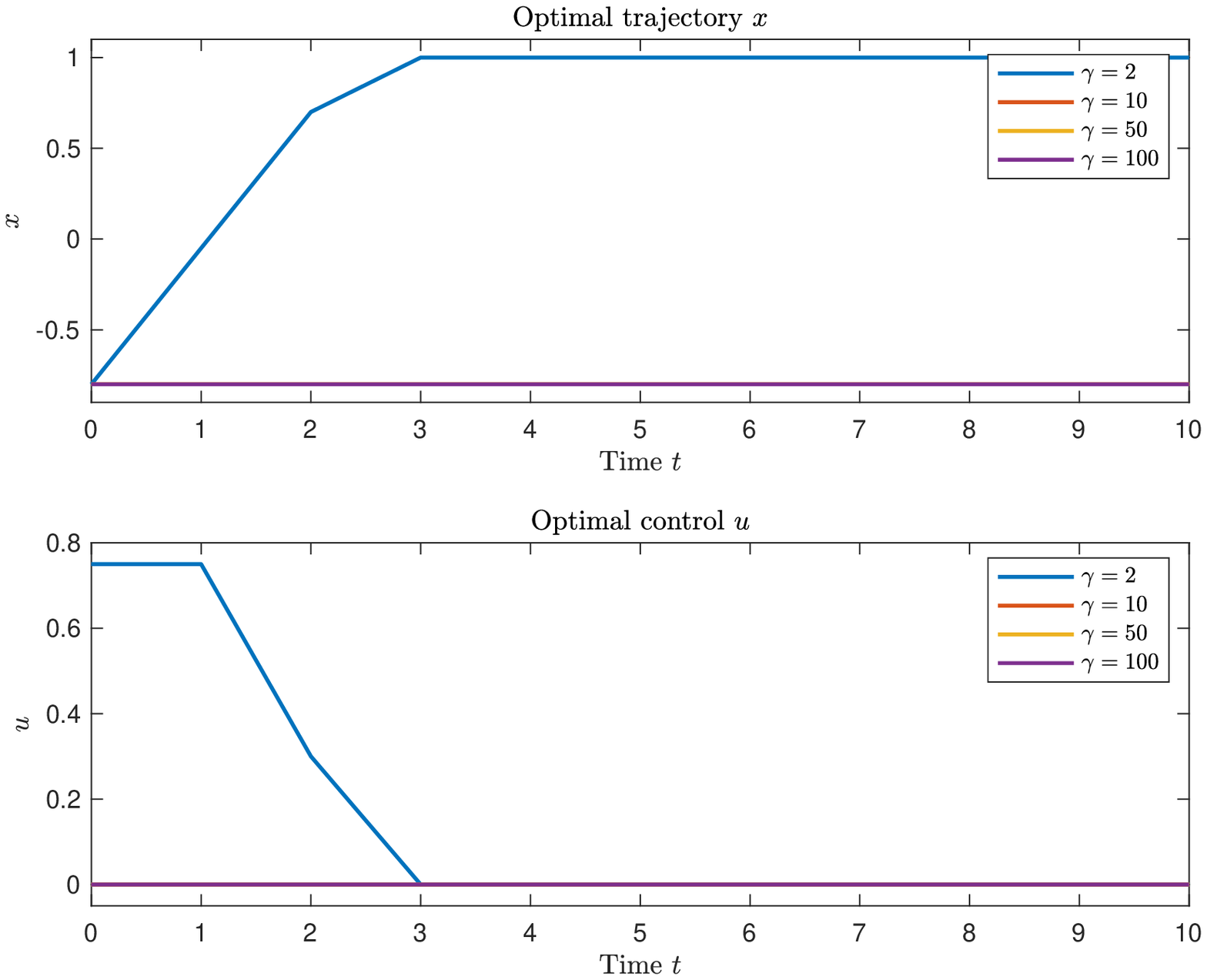}
		\end{minipage}
			\caption{Example \ref{ex:2} with $\beta = 0.7$ (left) and $\beta= 0.95$ (right) for different $\gamma$}\label{fig: ex2 gamma}
		\end{center} 
	\end{figure}

In order to examine this property in more detail we illustrate the behaviour of different values of $\gamma$ for fixed discount factors $\beta$ in Figure \ref{fig: ex2 gamma}. For $\gamma >1$ and $\beta \lesssim 0.95$ we can observe that the trajectories stay near by the start value and do not move away. In contrast, for $\beta \approx 1$ the trajectories converge to the global equilibrium. Thus, we do not get convergence to the local equilibrium any more
\end{bsp}

The two examples, above, have the particular feature that the dynamics is affine and the stage cost $\ell$ is strictly convex in a neighbourhood of the optimal equilibria. In this case, similar arguments as used in the proof of Theorem 4.2 in \cite{GMKW20} show that local strict dissipativity always holds. More precisely, we can restrict the proof of Theorem 4.2 in \cite{GMKW20} to a bounded neighbourhood $\X_{\NN}\subset \X$ of the local equilibrium $\xl$, e.g., $\BB_\eps(\xl)$, $\eps>0$, instead of $\X$, and a local strict convex stage cost function $\ell$. Following the proof, $\mathrm{D}\tell\equbl=0$ holds in the neighbourhood $\X_{\NN}$, which by the local strict convexity of $\tell$ implies that $\equbl$ is a strict local minimum. Together with the boundedness of $\X_{\NN}$, this implies the existence of $\alpha_\beta\in\KK_\infty$ and thus local discounted strict dissipativity. We remark that the calculation of $\lambda$ is the same as in the global case and yields a linear storage function. In the special case of Example \ref{ex:1}, above, it yields the storage function $\lambda\equiv 0$. In conclusion, local strict dissipativity always holds if the dynamics is affine and the stage cost $\ell$ is strictly convex near the locally optimal equilibrium.

With this observation, our dissipativity based analysis provides a complementary approach to the stable manifold based analysis carried out, e.g., in \cite{HKHF03}. Particularly, we can conclude that the model from this reference exhibits two equilibria at which the local turnpike property holds, which explains why the optimal trajectories are correctly reproduced by nonlinear model predictive control as shown in \cite[Section 5.1]{Gruene2015}.

Our final example demonstrates that strict convexity of $\ell$ is not needed for obtaining strict dissipativity, thus showing that a dissipativity based analysis allows for strictly weaker assumptions than strict convexity of $\ell$.

\begin{bsp}\label{ex:nonconvex}
	Consider the 1d control system
\[ x^+ = f(x,u) = 2x+u \]
with state constraints $\X=[-1,1]$, control constraints $\U=[-3,3]$, and stage cost
\[ \ell(x,u) = -x^2/2 + u^2.\]
Obviously, the stage cost is strictly concave in $x$ and strictly convex in $u$. Nevertheless, we can establish discounted strict $(x,u)$-dissipativity in $(x^*,u^*)=(0,0)$ (in this example even global) for $\beta\ge3/5$ with $\lambda(x) = -x^2$. This follows from the fact that with $a=2\beta/\sqrt{1+\beta}$ and $b=\sqrt{1+\beta}$ we have 
\begin{eqnarray*}
\ell(x,u) + \lambda(x) - \beta\lambda(f(x,u)) & = & 
-x^2/2 + u^2 - x^2 + \beta (2x+u)^2\\
& = & (4\beta-3/2) x^2 + 4\beta xu + (1+\beta)u^2\\
& = & (a x + b u)^2 + \left(4\beta-\frac{3}{2} - \frac{4\beta^2}{1+\beta}\right)x^2\\
& \ge & (a x + b u)^2,
\end{eqnarray*}
where the last inequality holds since the term in the large brackets is $\ge 0$ for $\beta \ge 3/5$.

Since the system is completely controllable in finite time, hence exponentially stabilizable, Theorem \ref{th: disturninf} in conjunction with Remark \ref{rem:suffcond}(ii) implies that for sufficiently large $\beta$ turnpike behaviour occurs at $x^*=0$. This is confirmed for $\beta=0.7$ in the left graph in Figure \ref{fig: ex nonconvex}. In contrast to this, the right graph in Figure~\ref{fig: ex nonconvex} shows that for $\beta=0.6$ the turnpike behaviour for $x^*=0$ does not occur. Rather, the optimal solution converges to the upper bound $x=1$ of the state constraint set. In this example, the numerical computations indicate that $\beta = 3/5=0.6$ is a relatively precise estimate of the threshold for the occurrence of the turnpike property at $x^*=0$, although for $\beta$ decreasing from $0.7$ to $0.6$ the set of initial values around $x^*=0$ for which the turnpike behaviour can be seen shrinks down rapidly.

	\begin{figure}[htb]
	\begin{center}
		\begin{minipage}[t]{0.47\textwidth}
			\includegraphics[width=\textwidth]{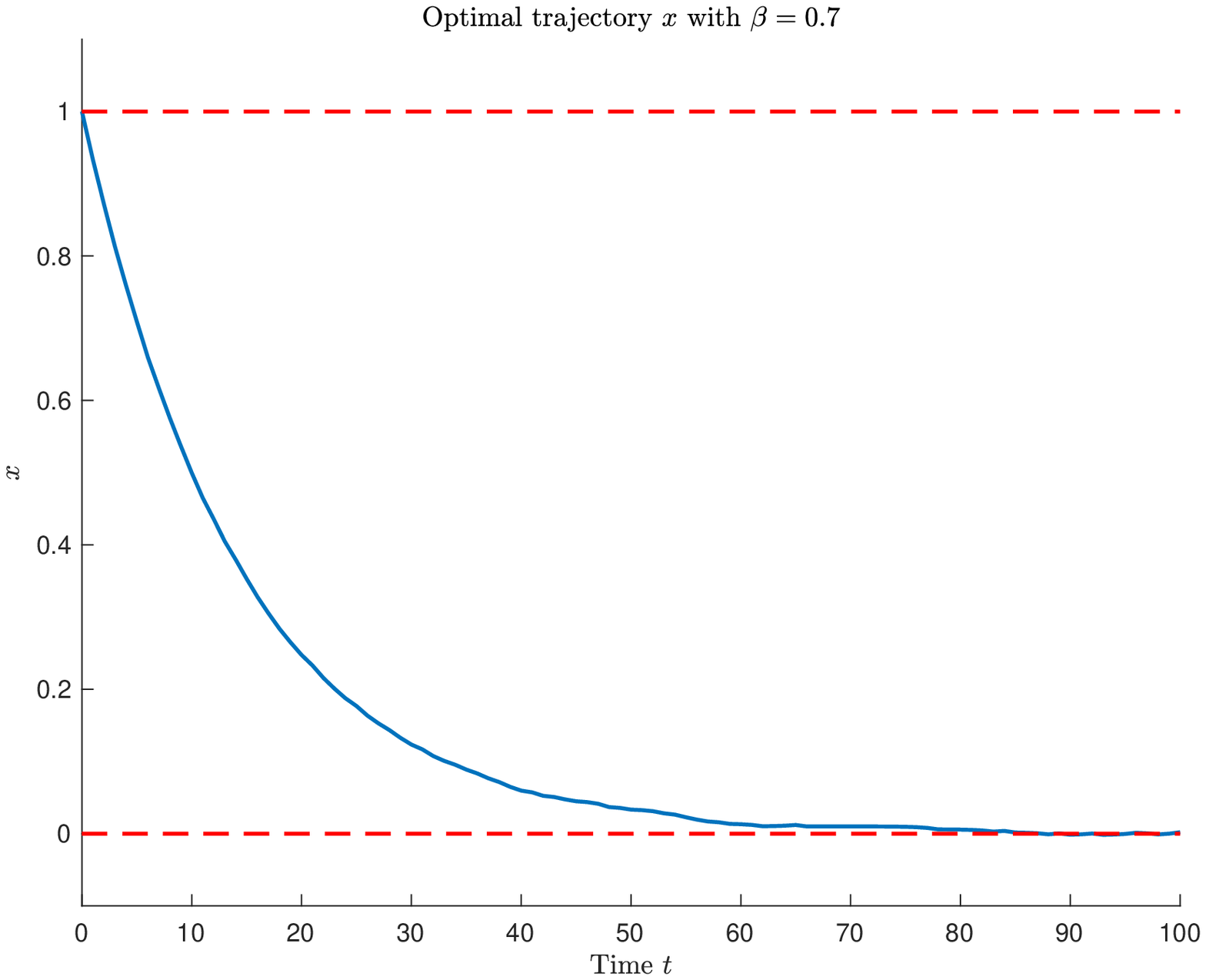}
		\end{minipage}
		\begin{minipage}[t]{0.47\textwidth}
			\includegraphics[width=\textwidth]{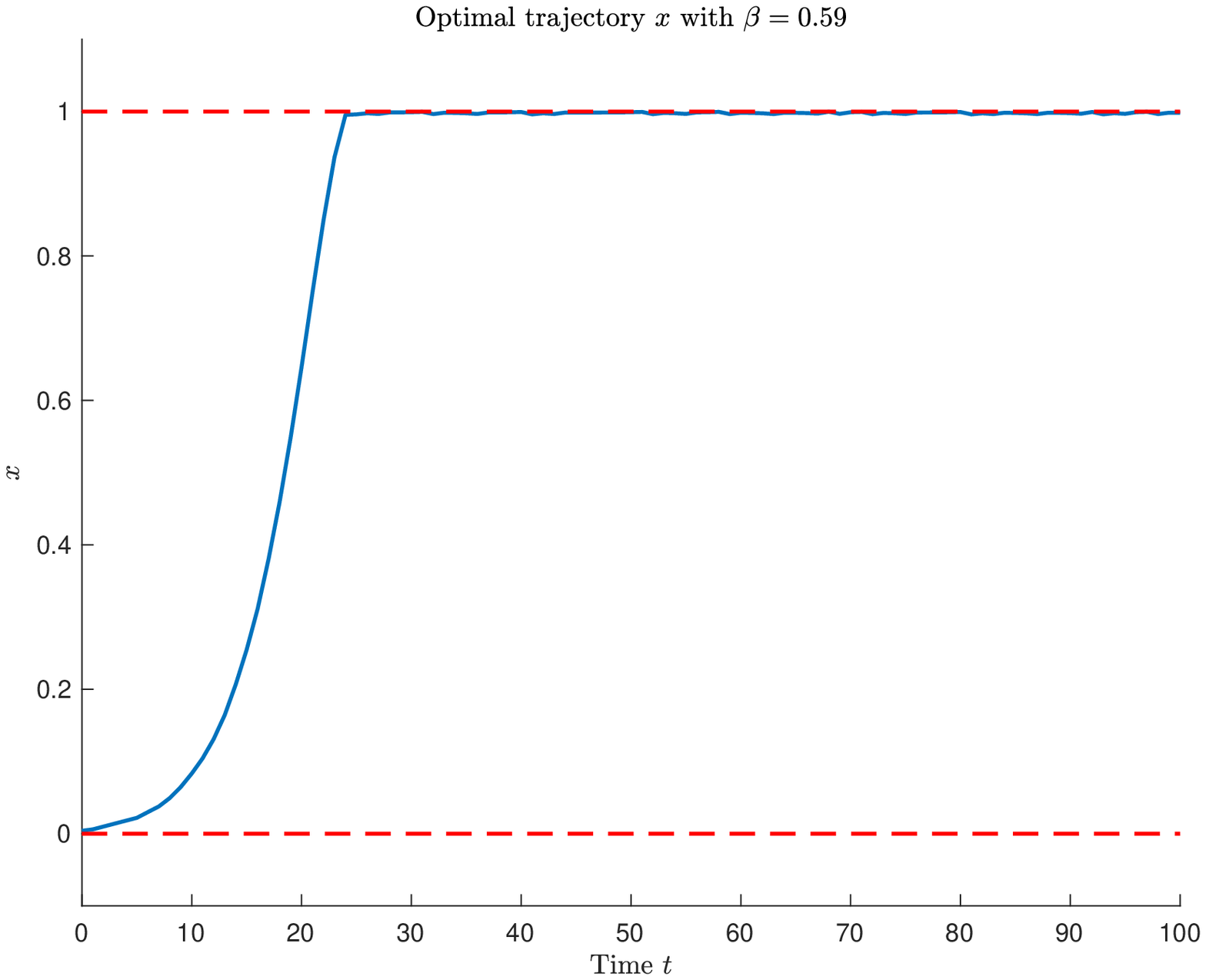}
		\end{minipage}
		\caption{Optimal trajectories for Example \ref{ex:nonconvex} with $\beta = 0.7$ and $x_0=1$ (left) and with $\beta= 0.59$ and $x_0=0.004$ (right)}\label{fig: ex nonconvex}.
	\end{center} 
\end{figure}
\end{bsp}

\section{Conclusion}\label{sec:conclusion}

In this paper we have shown that a local strict dissipativity assumption in conjunction with an appropriate growth condition on the optimal value function can be used in order to conclude a local turnpike property at an optimal equilibrium.  The turnpike property holds for discount factors from an interval $[\beta_1,\beta_2]$, where $\beta_1$ is determined by local quantities while $\beta_2$ is also determined by properties of the optimal control problem away from the local equilibrium. Hence, local and global properties together determine whether the interval is not empty. This is in accordance with other approaches for analysing local stability of equilibria in discounted optimal control such as those based on stable and unstable manifolds \cite{HKHF03}. In contrast to other approaches, however, the dissipativity based approach is not limited to (locally) strictly convex problems, as our last example showed.

{\bibliographystyle{plain}
\bibliography{references}}

\end{document}